\newtheorem{theo}{THEOREM}[section]
\newtheorem{lemma}[theo]{Lemma}
\newtheorem{cor}[theo]{Corollary}
\newtheorem{prop}[theo]{Proposition}
\newtheorem{dfntn}[theo]{Definition}
\newtheorem{rem}[theo]{Remark}
\newtheorem{claim}[theo]{Claim}
\newtheorem{ass}[theo]{Assumptions}
\newtheorem{ex}[theo]{Example}
\newtheorem{facts}[theo]{FACTS}
\newcommand{\brref}[1]{(\ref{#1})}
\newcommand{\Pin}[1]{{\mathbb P}^{#1}}
\newcommand{\restrict}[2]{{#1}_{\mid _{#2}}}
\newcommand{\lra}{\longrightarrow}
\newcommand{\Projcal}[1]{\mathbb{ P}({\mathcal #1})}
\newcommand{\oofp}[2]{{\mathcal O}_{\mathbb{ P}^{#1}}({#2})}
\newcommand{\scrollcal}[1]{(\Projcal{#1},\tautcal{#1})}
\newcommand {\xel} {(X, L)}
\newcommand{\tautcal}[1]{{\mathcal O}_{\mathbb{P}({\mathcal#1})}(1)}
\newcommand{\num}{\equiv}
\newcommand{\Pp}{\mathbb P}
\newcommand{\Oc}{\mathcal O}
\newenvironment{rem*}{\begin{rem}\em}{\end{rem}}
\newenvironment{ex*}{\begin{ex}\em}{\end{ex}}
\newenvironment{claim*}{\begin{claim}\em}{\end{claim}}
\newenvironment{facts*}{\begin{facts}\em}{\end{facts}}
\newcommand{\FF}{\mathbb{F}}
\title[Hilbert scheme of some threefold scrolls over ${\FF}_1$]{Hilbert scheme of some threefold scrolls over the Hirzebruch surface ${\FF}_1$}
\author{Gian Mario Besana}
\address{Gian Mario Besana\\
College of Computing and Digital Media\\ DePaul University\\
1 E. Jackson \\
Chicago IL 60604\\USA}
\email{gbesana@depaul.edu}
\author{Maria Lucia Fania}
\address{Maria Lucia Fania\\ Dipartimento di Ingegneria e Scienze dell'Informazione e Matematica\\
Universit\`{a} degli Studi di L'Aquila\\
Via Vetoio Loc. Coppito\\67100 L'Aquila\\Italy}
\email{fania@univaq.it}
\author{Flaminio Flamini}
\address{Flaminio Flamini\\Dipartimento di Matematica\\ Universit\`a degli Studi di Roma
Tor Vergata \\ Viale della Ricerca Scientifica, 1 - 00133 Roma\\Italy}
\email{flamini@mat.uniroma2.it}
\keywords{Hilbert scheme, Special threefolds, Vector  bundles, Ruled varieties}
\subjclass[2000]{Primary 14J30,14M07,14N25; Secondary 14N30}
\thanks{The authors wish to warmly thank C. Ciliberto and E. Sernesi for useful discussions and L. Ein for fundamental remarks concerning the proof of Theorem \ref{thm:van}.}
\begin{document}


\begin{abstract} Hilbert schemes of suitable smooth, projective manifolds of low degree which are $3$-fold scrolls over the Hirzebruch  surface $\FF_1$ are studied. An irreducible component of the Hilbert scheme parametrizing such varieties is shown to be generically smooth of the expected dimension  and  the general point of such a component is  described.
\end{abstract}


\maketitle

\section{Introduction}
The Hilbert scheme of complex projective manifolds with given Hilbert polynomial has interested several authors over the years. Ellingsrud, \cite{e}, dealt with arithmetically Cohen-Macaulay varieties of codimension two, while the Hilbert scheme of a special class of $3$-folds in $\Pin{5}$ was studied by Fania and Mezzetti,
\cite{fa-me}. General results in codimension two are also due to
M.C. Chang, \cite{chang1}, \cite{chang2}.

In codimension three, Kleppe and Mir\'o-Roig, \cite{kl-mr}, considered the case  of
arithmetically Gorenstein closed subschemes,  while Kleppe, Migliore, Mir\'o-Roig, Nagel and
Peterson, \cite{kmmnp}, dealt with good determinantal subschemes.

In the case of higher codimension, the Hilbert scheme of special classes of varieties was also studied.  For instance, the case of a Palatini scroll in $\Pin{n}$, with $n$ odd,  was considered by Faenzi and Fania,  \cite{fae-fan}.  The two dimensional version of the Palatini scroll is particularly well-studied, in the framework of surface scrolls in $\Pin{n}$ which are non-special.
In particular we mention the results on the
Hilbert schemes of non-special scrolls due to Calabri, Ciliberto, Flamini, and Miranda,
\cite{calabri-ciliberto-flamini-miranda:non-special}.

Previously  the first two authors considered several classes of $3$-folds in  $\Pin{n},$ $n \ge 6,$ and  computed the dimension of an irreducible component of their Hilbert scheme, \cite{be-fa} .
Considering the  existing classification of complex projective manifolds of low degree, the Hilbert scheme of classes of $3$-folds which are scrolls over the Hirzebruch surface
 $\FF_1$ emerges as a natural possible object of study. Further motivation to address this issue comes from the fact that Alzati and Besana, \cite{al-be}, established the existence of $3$-fold scrolls over  $\FF_1,$ of low degree, as a byproduct of a very ampleness criterion  for a particular class of vector bundles on  $\FF_1.$  In this work the Hilbert scheme of such a class of $3$-folds is therefore considered. An irreducible component of the Hilbert scheme parametrizing such varieties is shown to be generically smooth of the expected dimension (cf. Proposition \ref{Hilbertscheme of scrollover F1}) and  the general point of such a component is described (cf. Theorem \ref{thm:genpo}).

The paper is organized as follows. In \S\,\ref{notation}  notation and terminology used in the paper are fixed once and for all.
In \S\,\ref{S:vbF1}, based on previous results in \cite{al-be}, the vector bundles on $\FF_1$ that are of interest in this work are described (cf. Assumptions \ref{ass:AB});  cohomological properties of such vector bundles (cf. Corollary \ref{cor:comEAB} and Lemma \ref{lem:autE}) as well as of the family of extension classes parametrizing them (cf. Lemma \ref{lem:ext1}) are then established. The main result of the section is Theorem  \ref{thm:van} where, under suitable numerical assumptions, it is shown that the general vector bundle $\mathcal E$ in the extension class parameter space is {\em non-special}, i.e. $h^1(\FF_1, \mathcal E) = 0.$ Section \ref{reinterpretation} offers other interpretations of the non-speciality of the vector bundles under consideration  in terms of suitable coboundary maps and cup products of divisors on $\FF_1$  as well as of projective geometry of suitable Segre varieties (cf. Corollary \ref{cor:van} and what follows).
In \S\,\ref{S:scrollsF1} the focus is on Hilbert schemes parametrizing families of $3$-dimensional scrolls over $\FF_1$ defined by the vector bundles studied in the previous sections. Indeed, as in \cite{al-be}, under assumptions giving necessary conditions for a general $\mathcal E$ to be very ample on $\FF_1$ (cf. Assumptions \ref{ass:AB2}), the ruled projective variety $\Pp(\mathcal E)$ embedded via the tautological linear system is studied; as $\mathcal E$ varies in the parameter space of extensions, the associated ruled varieties are shown to fill-up an irreducible component of the Hilbert scheme parametrizing such varieties; such a component is then shown to be generically smooth and of the expected dimension (cf. Proposition \ref{Hilbertscheme of scrollover F1}) and a description of  the general point of such a component is offered (cf. Theorem \ref{thm:genpo}).
In \S\,\ref{Examples},  key examples of low-degree scrolls on $\FF_1$ are discussed, and examples of scrolls over a quadric surface, which were previously studied  in \cite{be-fa}, are reinterpreted.


\section{Notation and Preliminaries}
\label{notation}
The following notation will be used throughout this work.
  \begin{enumerate}
\item [ ] $X$ is a smooth, irreducible, projective variety of dimension $3$ (or simply a $3$-fold);
\item[ ]$\chi(\mathcal F) = \sum(-1)^ i h^i(\mathcal F)$, the Euler characteristic of $\mathcal F$, where $\mathcal F$ is any vector bundle of rank $r \geq 1$ on $X$;
\item[ ]  $\restrict{\mathcal F}{Y}$ the restriction of $\mathcal F$ to a subvariety $Y;$
\item[ ]  $K_X$ the canonical bundle of $X.$ When the context is clear, $X$ may be dropped, so $K_X = K$;
\item[ ] $c_i = c_i(X)$,  the $i^{th}$ Chern class
of $X$;
\item[ ] $d = \deg{X} = L^3$, the degree of $X$ in the embedding
given by a very-ample line bundle $L$;
\item[] $g = g(X),$ the sectional genus of $\xel$ defined by
$2g-2=(K+2L)L^2;$
\item[ ] if $S$ is a smooth surface, then $q(S) = h^1 ({\mathcal O}_{S})$ denotes the irregularity of $S$, whereas $p_g(S) = h^0 (K_S)$, denotes the geometric genus of $S$;
\item[] $\equiv$ will denote the numerical equivalence of divisors on a smooth surface $S$;
\end{enumerate}

\begin{dfntn}\label{specialvar} A pair $(X, L)$, where
$L$ is an ample line bundle on a $3$-fold $X,$ is
a {\it scroll} over a normal variety $Y$ if there exist an ample line
bundle $M$ on $Y$ and a surjective morphism  $\varphi: X \to Y$ with
connected fibers
such that $K_X + (4 - \dim Y) L = \varphi^*(M).$
\end{dfntn}

In particular, if $Y$ is smooth and $\xel$ is a scroll over $Y$,  then (see \cite[Prop. 14.1.3]{BESO})
$X \cong \Projcal{E} $, where ${\mathcal E}= \varphi_{*}(L)$ and
$L$ is the tautological  line bundle on $\Projcal{E}.$ Moreover, if $S \in |L|$ is a smooth divisor,
then (see e.g. \cite[Thm. 11.1.2]{BESO}) $S$ is the blow
up of $Y$ at $c_2(\mathcal{E})$ points; therefore $\chi({\mathcal O}_{Y}) = \chi({\mathcal O}_{S})$ and
\begin{equation}\label{eq:d}
d : = L^3 = c_1^2(\mathcal{E})-c_2(\mathcal{E}).
\end{equation}

Throughout this work the scroll's base $Y $ will be the Hirzebruch surface $\FF_1$ defined as
$\FF_1 = \Pp(\Oc_{\Pp^1} \oplus\Oc_{\Pp^1}(-1))$.
Let $$\pi : \FF_1 \to \Pp^1$$be the natural projection onto the base.
It is well-known that $${\rm Num}(\FF_1) = \mathbb{Z}[C_0] \oplus \mathbb{Z}[f],$$where
\noindent

$\bullet$ $C_0$ denotes the unique section of $\FF_1$
corresponding to the quotient bundle morphism $\Oc_{\Pp^1} \oplus\Oc_{\Pp^1}(-1) \to\!\!\!\to \Oc_{\Pp^1}(-1)$ on $\Pp^1$, and
\noindent

$\bullet$ $f = \pi^*(p)$, for any $p \in \Pp^1$.
\noindent

In particular $$C_0^2 = - 1, \; f^2 = 0, \; C_0f = 1.$$

\section{Some rank-two vector bundles over $\FF_1$} \label{S:vbF1}

Let $\mathcal{E}$ be a rank-two vector bundle over $\FF_1$ and let $c_i(\mathcal{E})$ be the $i^{th}$-Chern class of $\mathcal E$, $1 \leq i \leq2$.
Then $c_1( \mathcal{E}) \num a C_0 + b f$, for some $ a, b \in \mathbb Z$, and $c_2(\mathcal{E}) \in \mathbb Z$.

From now on (cf. \S\,\ref{S:scrollsF1} and Proposition \ref{prop:AB} for motivation), we will use:

\begin{ass}\label{ass:AB} Let $\mathcal E$ be a rank-two vector bundle over
$\FF_1$ such that $$c_1(\mathcal E) \num 3 C_0 + b f, \;\; c_2(\mathcal E) = k, \;\; {\rm with} \;\; k \geq b \geq 4.$$ Moreover, we assume there exists an exact sequence
\begin{equation}\label{eq:al-be}
0 \to A \to \mathcal E \to B \to 0,
\end{equation}where $A$ and $B$ are line bundles on $\FF_1$ such that
\begin{equation}\label{eq:al-be3}
A \num 2 C_0 + (2b-k-2) f \;\; {\rm and} \;\; B \num C_0 + (k - b + 2) f.
\end{equation}

\end{ass} In particular, $c_1(\mathcal E) = A + B \;\; {\rm and} \;\; k = c_2(\mathcal E) = AB$.

Note that the exact sequence \eqref{eq:al-be} gives important preliminary information on the cohomology of $\mathcal E$, $A$ and $B$. Indeed, one has

\begin{prop}\label{prop:comEAB} With hypotheses as in Assumptions \ref{ass:AB},
\begin{equation}\label{eq:comEAB}
h^j(\FF_1, \mathcal E) = h^j(\FF_1, A) = 0, \;\; j \geq 2, \;\; {\rm and} \;\; h^i(\FF_1, B) = 0, \;\; i \geq 1.
\end{equation}
\end{prop}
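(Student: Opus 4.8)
The plan is to read all the required vanishings off the long exact cohomology sequence associated to \eqref{eq:al-be}, after first establishing the relevant cohomology of the two line bundles $A$ and $B$ directly from their numerical classes in \eqref{eq:al-be3}. Since $\FF_1$ is a rational surface, every line bundle on it is determined up to linear equivalence by its numerical class, and its cohomology can be computed either via the projection $\pi:\FF_1\to\Pp^1$ and the Leray spectral sequence, or by the standard formulas for cohomology of line bundles on a Hirzebruch surface (combined with Serre duality, using $K_{\FF_1}\num -2C_0-3f$). The key numerical inputs are that $A\num 2C_0+(2b-k-2)f$ and $B\num C_0+(k-b+2)f$, together with the hypothesis $k\ge b\ge 4$.

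First I would handle $B$. Writing $B\num C_0+(k-b+2)f$ with $k-b+2\ge 2>0$, one checks that $B$ restricted to each fiber $f\cong\Pp^1$ has degree $C_0f=1$, so $\pi_*B$ is a rank-two bundle on $\Pp^1$ and $R^1\pi_*B=0$; identifying $\pi_*B=\Oc_{\Pp^1}\oplus\Oc_{\Pp^1}(-1)$ twisted by $\Oc_{\Pp^1}(k-b+2)$, which is a sum of line bundles of non-negative degree since $k-b+2\ge 2$, gives $h^1(\FF_1,B)=h^1(\Pp^1,\pi_*B)=0$ and of course $h^2(\FF_1,B)=0$ for dimension/degree reasons (or by Serre duality, since $K_{\FF_1}-B$ has negative intersection with $f$). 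That is the asserted $h^i(\FF_1,B)=0$ for $i\ge 2$ — wait, for $i\ge 1$.

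Next I would handle $A$. Here only the vanishing $h^j(\FF_1,A)=0$ for $j\ge 2$ is needed, i.e.\ $h^2(\FF_1,A)=0$. By Serre duality $h^2(\FF_1,A)=h^0(\FF_1,K_{\FF_1}-A)$, and $K_{\FF_1}-A\num(-2-2)C_0+(-3-(2b-k-2))f=-4C_0+(k-2b+1)f$; since this class has $C_0$-coefficient $-4<0$ and $C_0$ moves in no effective class with negative multiplicity of the negative section... more simply, intersecting $K_{\FF_1}-A$ with the nef class $f$ gives $-4<0$, so no effective divisor lies in this class and $h^0=0$. Hence $h^2(\FF_1,A)=0$. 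Finally, feeding these into the long exact sequence $\cdots\to H^j(A)\to H^j(\mathcal E)\to H^j(B)\to H^{j+1}(A)\to\cdots$ yields $h^j(\FF_1,\mathcal E)=0$ for $j\ge 2$, since for $j=2$ both $H^2(A)$ and $H^2(B)$ vanish, and $H^j=0$ for $j\ge 3$ trivially on a surface.

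I do not expect any serious obstacle: the whole argument is a bookkeeping exercise with cohomology of line bundles on $\FF_1$ plus one application of the long exact sequence. The only point requiring a moment of care is getting the numerical class of $A$ (and of $K_{\FF_1}-A$, $K_{\FF_1}-B$) right from \eqref{eq:al-be3} and checking that the inequalities $k\ge b\ge 4$ are exactly what force the relevant intersection numbers with $f$ (resp.\ with $C_0$) to have the needed sign; this is where the hypotheses are used, and it should be presented explicitly rather than left to the reader.
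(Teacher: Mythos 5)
Your proof is correct and follows essentially the same route as the paper: vanishing in degrees $\geq 3$ for dimension reasons, $h^2(A)=h^2(B)=0$ via Serre duality together with negative intersection against the fibre class $f$, and then the long exact sequence of \eqref{eq:al-be} to get $h^j(\FF_1,\mathcal E)=0$ for $j\geq 2$. The one genuine point of divergence is the proof that $h^1(\FF_1,B)=0$: the paper writes $B\num K_{\FF_1}+B'$ with $B'\equiv 3C_0+(k-b+5)f$ ample (using $k\ge b$ and Hartshorne's ampleness criterion) and invokes Kodaira vanishing, whereas you push $B$ down to $\Pp^1$, identify $\pi_*B\cong(\Oc_{\Pp^1}\oplus\Oc_{\Pp^1}(-1))\otimes\Oc_{\Pp^1}(k-b+2)$ with $R^1\pi_*B=0$, and observe that both summands have non-negative degree since $k\ge b$. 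Both arguments are valid; yours is more elementary and is in fact the same Leray computation the paper itself uses later in Lemma \ref{lem:ext1}, so nothing is lost. One small arithmetic slip: $K_{\FF_1}-A\num -4C_0+(k-2b-1)f$, not $-4C_0+(k-2b+1)f$; this is harmless, since your non-effectivity argument only uses the coefficient of $C_0$, i.e.\ $(K_{\FF_1}-A)\cdot f=-4<0$.
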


\begin{proof} It is clear that
$$h^j(\FF_1, \mathcal E) = h^j(\FF_1, A) = h^j(\FF_1, B) = 0, \; j \geq 3,$$for dimension reasons.
Observe also that $$h^2(\FF_1, A) = h^2(\FF_1, B) = 0.$$Indeed, by Serre's duality, we have
$h^2(A ) = h^0(K_{\FF_1} - A)$ and $h^2(B ) = h^0(K_{\FF_1} - B)$. Since $K_{\FF_1} \num
- 2 C_0 - 3 f$  then, from \eqref{eq:al-be3},
$$K_{\FF_1} - A \num - 4C_0 - (2b-k+1) f, \;\;\; K_{\FF_1} - B \num - 3C_0 - (k-b+5) f$$which cannot be effective, since they both negatively intersect the irreducible divisor $f$. In particular, this also implies
$$h^2(\FF_1, \mathcal E) = 0.$$

We claim that, under Assumptions \ref{ass:AB}, we also have $$h^1(\FF_1, B) = 0.$$ Indeed,
$B \num K_{\FF_1} + B'$, where $ B' \equiv 3 C_0 + (k -b +5) f$. Since $k \geq b$, by \cite[Cor. 2.18, p. 380]{H} we deduce that $B'$ is ample. Therefore, from the Kodaira vanishing theorem,  we are done.
\end{proof}

From \eqref{eq:al-be} and Proposition \ref{prop:comEAB}, we have:
\begin{equation}\label{eq:comseq}
0 \to H^0(A) \to H^0(\mathcal E) \to H^0(B) \stackrel{\partial}{\longrightarrow} H^1(A) \to H^1(\mathcal E) \to 0,
\end{equation}where $\partial$ is the {\em coboundary map} determined by \eqref{eq:al-be}; in particular,
\begin{equation}\label{eq:h1Eh1A}
h^1(\mathcal E) \leq h^1(A).
\end{equation}

\begin{cor}\label{cor:comEAB} With hypotheses as in Assumptions \ref{ass:AB}, one has
\begin{equation}\label{eq:h0A}
h^0(A) = 6b-3k-6 + h^1(A),
\end{equation}
\begin{equation}\label{eq:h0E}
h^0(\mathcal E) = 4b-k-1 + h^1(\mathcal E).
\end{equation}
\begin{equation}\label{eq:h0B}
h^0(B) = 2k-2b+5,
\end{equation}
\end{cor}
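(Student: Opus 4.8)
The plan is to compute all three cohomology dimensions via Riemann--Roch on $\FF_1$, using Proposition \ref{prop:comEAB} to discard the higher cohomology and the exact sequence \eqref{eq:comseq} to relate the $H^0$'s and $H^1$'s. First I would recall that for a line bundle $D \num \alpha C_0 + \beta f$ on $\FF_1$ one has $\chi(\calo_{\FF_1}(D)) = \frac{1}{2}D(D - K_{\FF_1}) + \chi(\calo_{\FF_1}) = \frac{1}{2}D(D-K_{\FF_1}) + 1$, and since $K_{\FF_1} \num -2C_0 - 3f$ while $C_0^2 = -1$, $f^2 = 0$, $C_0 f = 1$, this expands to a concrete polynomial in $\alpha, \beta$. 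Likewise, for the rank-two bundle $\mathcal E$, additivity of $\chi$ on \eqref{eq:al-be} gives $\chi(\mathcal E) = \chi(A) + \chi(B)$, or one can use $\chi(\mathcal E) = \frac{1}{2}(c_1^2(\mathcal E) - 2c_2(\mathcal E) - c_1(\mathcal E)K_{\FF_1}) + 2\chi(\calo_{\FF_1})$ directly; the two agree by Assumptions \ref{ass:AB}.

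Next I would substitute the numerical classes from \eqref{eq:al-be3}. For $B \num C_0 + (k-b+2)f$ we already know from Proposition \ref{prop:comEAB} that $h^i(B) = 0$ for $i \geq 1$, so $h^0(B) = \chi(B)$; plugging $\alpha = 1$, $\beta = k-b+2$ into the Riemann--Roch polynomial should give exactly $2k - 2b + 5$, establishing \eqref{eq:h0B}. For $A \num 2C_0 + (2b-k-2)f$ we know $h^j(A) = 0$ for $j \geq 2$, so $h^0(A) - h^1(A) = \chi(A)$; plugging $\alpha = 2$, $\beta = 2b-k-2$ into the same polynomial should yield $\chi(A) = 6b - 3k - 6$, which rearranges to \eqref{eq:h0A}. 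Finally, since $h^j(\mathcal E) = 0$ for $j \geq 2$ by Proposition \ref{prop:comEAB}, we get $h^0(\mathcal E) - h^1(\mathcal E) = \chi(\mathcal E) = \chi(A) + \chi(B) = (6b - 3k - 6) + (2k - 2b + 5) = 4b - k - 1$, which is \eqref{eq:h0E}; alternatively this also follows by taking alternating sums in the exact sequence \eqref{eq:comseq} together with \eqref{eq:h0A} and \eqref{eq:h0B}, which is perhaps the cleanest presentation since it reuses the sequence already displayed.

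There is no serious obstacle here — the argument is entirely a bookkeeping computation in $\mathrm{Num}(\FF_1)$ — but the one place to be careful is the sign and intersection conventions: the sectional/arithmetic genus formula and the intersection numbers $C_0^2 = -1$, $C_0 f = 1$, $f^2 = 0$ must be tracked precisely, and one must make sure $\chi(\calo_{\FF_1}) = 1$ is used (since $\FF_1$ is rational). I would double-check the final numerology by verifying internal consistency: the three displayed formulas must be compatible with the exact sequence \eqref{eq:comseq}, i.e. $h^0(A) - h^0(\mathcal E) + h^0(B) - h^1(A) + h^1(\mathcal E) = 0$, which indeed reduces to $(6b-3k-6) - (4b-k-1) + (2k-2b+5) = 0$ as an identity in $b$ and $k$. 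This sanity check makes the computation essentially self-correcting, so I would present the proof as: compute $\chi$ of each sheaf by Riemann--Roch, invoke Proposition \ref{prop:comEAB} to drop vanishing terms, and read off \eqref{eq:h0A}, \eqref{eq:h0E}, \eqref{eq:h0B}.
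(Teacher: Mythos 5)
Your proposal is correct and follows essentially the same route as the paper: Riemann--Roch on $\FF_1$ gives $\chi(A)=6b-3k-6$ and $\chi(B)=2k-2b+5$, Proposition \ref{prop:comEAB} eliminates the higher cohomology, and additivity of $\chi$ on \eqref{eq:al-be} yields \eqref{eq:h0E}. The arithmetic checks out, so nothing further is needed.
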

\begin{proof} From  Proposition \ref{prop:comEAB}, we have
\begin{equation}\label{eq:comseq2}
\chi(A) = h^0(A) - h^1(A), \;\; \chi(B) =h^0(B), \;\; \chi(\mathcal E) = h^0(\mathcal E) - h^1(\mathcal E).
\end{equation}From the Riemann-Roch formula, we have
$$\chi(A) = \frac{1}{2}A (A-K_{\FF_1}) + 1 = \frac{1}{2}A (A') + 1 = $$
$$\frac{1}{2} \left(2C_0 + (2b-k-2) f \right) \left(4C_0 + (2b-k+1)f\right) + 1 = 6b-3k-6,$$whereas
$$\chi(B) = h^0(B) =  \frac{1}{2} B (B-K_{\FF_1}) + 1 = \frac{1}{2}B (B') + 1 = $$
$$\frac{1}{2} \left(C_0 + (k-b+2) f \right) \left(3C_0 + (k-b+5)f\right) + 1 = 2k-2b+5.$$Since $\chi(\mathcal E) =
\chi(A) + \chi(B)$, the remaining statements follow from \eqref{eq:comseq} and \eqref{eq:comseq2}.
\end{proof}

\subsection{Vector bundles in $Ext^1(B,A)$}\label{S:vb} This subsection is devoted to an analysis of
vector bundles fitting in the exact sequence \eqref{eq:al-be}.

We need the following:
\begin{lemma}\label{lem:ext1} With hypotheses as in Assumptions \ref{ass:AB}, one has
\begin{equation}\label{eq:dimExt1}
\dim(Ext^1(B,A)) = \left\{
\begin{array}{ccc}
0 & & b \leq k <  \frac{3b-3}{2}\\
4k-6b+7 & & k \geq \frac{3b-3}{2}.
\end{array}
\right.
\end{equation}
\end{lemma}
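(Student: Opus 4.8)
The plan is to reduce the computation of $\mathrm{Ext}^1(B,A)$ to an ordinary cohomology group of a line bundle on $\FF_1$ and evaluate that group directly. Since $B$ is a line bundle, $\mathcal{H}om(B,A)\cong A\otimes B^{-1}$ is locally free, so $\mathrm{Ext}^i_{\FF_1}(B,A)\cong H^i(\FF_1,A\otimes B^{-1})$ for all $i$; in particular $\dim\mathrm{Ext}^1(B,A)=h^1(\FF_1,A\otimes B^{-1})$. Using \eqref{eq:al-be3} one computes $A\otimes B^{-1}\num (2C_0+(2b-k-2)f)-(C_0+(k-b+2)f)=C_0+mf$ with $m:=3b-2k-4$, so the whole problem becomes the evaluation of $h^1(\FF_1,\mathcal O_{\FF_1}(C_0+mf))$ as a function of the integer $m$.

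For the cohomology of $\mathcal O_{\FF_1}(C_0+mf)$ I would restrict to the section $C_0\cong\Pp^1$, i.e.\ use the exact sequence $0\to\mathcal O_{\FF_1}(mf)\to\mathcal O_{\FF_1}(C_0+mf)\to\mathcal O_{C_0}(C_0+mf)\to 0$. Here $\mathcal O_{\FF_1}(mf)=\pi^*\mathcal O_{\Pp^1}(m)$, hence by Leray (with $R^q\pi_*\mathcal O_{\FF_1}=0$ for $q>0$) it has the cohomology of $\mathcal O_{\Pp^1}(m)$, while $\mathcal O_{C_0}(C_0+mf)\cong\mathcal O_{\Pp^1}(m-1)$ because $C_0\cdot(C_0+mf)=-1+m$. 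Plugging the standard cohomology of line bundles on $\Pp^1$ into the long exact sequence gives $h^0(\FF_1,C_0+mf)=h^2(\FF_1,C_0+mf)=0$ and $h^1(\FF_1,C_0+mf)=h^1(\Pp^1,\mathcal O_{\Pp^1}(m))+h^1(\Pp^1,\mathcal O_{\Pp^1}(m-1))$, which is $0$ when $m\ge 0$ and $(-m-1)+(-m)=-2m-1$ when $m\le -1$. (As a cross-check, Riemann–Roch on $\FF_1$ yields $\chi(\FF_1,C_0+mf)=2m+1$, and $h^2=0$ also follows from Serre duality together with the "negative intersection with the irreducible divisor $f$" argument used in the proof of Proposition \ref{prop:comEAB}.)

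It then remains to translate the dichotomy in $m$ into the stated dichotomy in $k$. Since $k\in\mathbb Z$, one has $m=3b-2k-4\ge 0\iff 2k\le 3b-4\iff 2k<3b-3\iff k<\tfrac{3b-3}{2}$, and in this range $\dim\mathrm{Ext}^1(B,A)=h^1=0$; note this range is nonempty under Assumptions \ref{ass:AB}, as $k\ge b$ and $b<\tfrac{3b-3}{2}$ for $b\ge 4$. In the complementary range $k\ge\tfrac{3b-3}{2}$ one has $m\le -1$, hence $\dim\mathrm{Ext}^1(B,A)=-2m-1=-2(3b-2k-4)-1=4k-6b+7$, which is the asserted value. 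I do not expect a genuine obstacle here: the substance is the (routine) cohomology of line bundles on a Hirzebruch surface, and the only point deserving a moment's care is the last chain of equivalences, where the inequality $2k\le 3b-4$ is rewritten as the strict inequality $k<\tfrac{3b-3}{2}$ using the integrality of $k$.
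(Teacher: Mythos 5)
Your proof is correct and follows essentially the same route as the paper: both reduce the problem to $\dim\mathrm{Ext}^1(B,A)=h^1(\FF_1,A-B)$ with $A-B\equiv C_0+(3b-2k-4)f$ and then arrive at $h^1(\Oc_{\Pp^1}(m))+h^1(\Oc_{\Pp^1}(m-1))$ for $m=3b-2k-4$. The only (immaterial) difference is that the paper obtains this sum directly from Leray's isomorphism $h^1(\FF_1,C_0+mf)=h^1(\Pp^1,(\Oc_{\Pp^1}\oplus\Oc_{\Pp^1}(-1))\otimes\Oc_{\Pp^1}(m))$, whereas you use the restriction sequence along $C_0$; your handling of the integrality step $2k\le 3b-4\iff k<\tfrac{3b-3}{2}$ is also correct.
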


\begin{proof} By standard facts, $Ext^1(B,A) \cong H^1(A-B)$. From Assumptions \ref{ass:AB},

$$A - B \equiv 2C_0 + (2b-k-2) f - C_0 - (k-b+2)f = C_0 + (3b-2k-4)f.$$From Leray's isomorphism,
$$h^1(\FF_1, C_0 + (3b-2k-4)f) = h^1\left(\Pp^1, (\Oc_{\Pp^1} \oplus \Oc_{\Pp^1} (-1)) \otimes \Oc_{\Pp^1} (3b-2k-4)\right).$$The latter equals $h^1(\Oc_{\Pp^1} (3b-2k-4)) + h^1(\Oc_{\Pp^1} (3b-2k-5))$ which, by Serre's duality, coincides with
$h^0(\Oc_{\Pp^1} (2k-3b+2)) + h^0( \Oc_{\Pp^1} (2k-3b+3))$. The statement  immediately follows.
\end{proof}

In particular, we have

\begin{cor}\label{cor:dimExt1} With hypotheses as in Assumptions \ref{ass:AB},
\begin{itemize}
\item[(i)] for $b \leq k < \frac{3b-3}{2}$, $\mathcal E \in Ext^1(B,A)$ splits, i.e. $\mathcal E = A \oplus B$;
\item[(ii)] for $k \geq  \frac{3b-3}{2}$, the general vector bundle $\mathcal E \in Ext^1(B,A)$ is indecomposable.
\end{itemize}
\end{cor}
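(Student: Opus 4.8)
The plan is to deduce both statements directly from Lemma \ref{lem:ext1}. For part (i), when $b \leq k < \frac{3b-3}{2}$ the lemma gives $\dim(Ext^1(B,A)) = 0$, so the only extension class is the zero class, and the corresponding exact sequence \eqref{eq:al-be} splits; hence $\mathcal E \cong A \oplus B$ is the \emph{only} vector bundle (up to isomorphism) fitting in such a sequence. No further work is needed here beyond recalling that the trivial element of $Ext^1$ corresponds to the split extension.

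For part (ii), when $k \geq \frac{3b-3}{2}$ we have $\dim(Ext^1(B,A)) = 4k - 6b + 7 \geq 1$, so there exist non-trivial extension classes. The substance of the argument is to show that a \emph{general} such $\mathcal E$ is indecomposable. I would argue by contradiction: suppose $\mathcal E \cong L_1 \oplus L_2$ for line bundles $L_i$ on $\FF_1$. Then $c_1(L_1) + c_1(L_2) \equiv 3C_0 + bf$ and $c_1(L_1)c_1(L_2) = k$. The point is that decomposable bundles with the prescribed Chern classes form a \emph{proper closed subset} of $Ext^1(B,A)$ (in fact a finite union of linear subspaces). Concretely, if $\mathcal E = L_1 \oplus L_2$ admits a surjection onto $B$, then $B$ must be a quotient of $L_1$ or $L_2$, i.e. $B - L_i$ is effective for some $i$, and the sub-line-bundle $A$ must inject as the complementary summand; this forces $\{L_1, L_2\} = \{A', B'\}$ with $A' + B' = A + B$, $A'B' = k$, and $A' \hookrightarrow \mathcal E$, $\mathcal E \twoheadrightarrow B'$ compatibly. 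For each such numerically admissible pair, the extension classes yielding that splitting type form the image of a map from $\mathrm{Hom}$-spaces, hence a linear (or at worst closed) subvariety of $Ext^1(B,A)$; since there are only finitely many numerical possibilities for $(L_1, L_2)$ (bounded because both $B - L_i$ and the intersection-number constraint $L_1 L_2 = k$ restrict the coefficients), their union is a proper closed subset as soon as it does not exhaust the positive-dimensional space $Ext^1(B,A)$. The split class $\mathcal E = A \oplus B$ itself is a single point, so it cannot be everything when $\dim Ext^1(B,A) \geq 1$; a short dimension count rules out the other finitely many decomposable strata as well.

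The main obstacle I anticipate is making the claim ``decomposable $\Rightarrow$ lies in a proper closed subset'' fully rigorous: one must check that no decomposable splitting type can sweep out all of $Ext^1(B,A)$, which requires either an explicit dimension estimate for each admissible pair $(L_1, L_2)$ or an abstract semicontinuity argument. A clean way to finish is to note that indecomposability is an \emph{open} condition on the (irreducible) parameter space $Ext^1(B,A)$ — by semicontinuity of $h^0$ of endomorphism bundles, or by the openness of the locus where $\mathcal E$ is simple — so it suffices to exhibit a \emph{single} indecomposable $\mathcal E$ in $Ext^1(B,A)$; and the non-split extensions (which exist since the group is nonzero) that are not of the special form above provide one. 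In fact, the very cleanest route is: $\mathcal E \cong A \oplus B$ would force the coboundary $\partial$ in \eqref{eq:comseq} to vanish, but a non-split class gives a nonzero image of the corresponding element, so any non-split $\mathcal E$ already fails to be $A \oplus B$; then one rules out the remaining (finitely many) decomposable types by the numerical constraints on sub-line-bundles of $\mathcal E$ together with the generality of $\mathcal E$ in the positive-dimensional $Ext^1(B,A)$.
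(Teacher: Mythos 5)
Part (i) of your proposal is correct and is exactly the paper's (implicit) argument: $\dim Ext^1(B,A)=0$ forces every extension to split, full stop. For part (ii), however, your argument has two genuine gaps. First, your description of the decomposable locus rests on the claim that a surjection $L_1\oplus L_2\twoheadrightarrow B$ forces $B$ to be a quotient of one of the summands (and dually that $A$ injects as the complementary summand); this is false -- in the Euler sequence $0\to\Oc_{\Pp^1}\to\Oc_{\Pp^1}(1)^{\oplus 2}\to\Oc_{\Pp^1}(2)\to 0$ neither summand surjects onto the quotient -- so the finitely many ``decomposable strata'' are not pinned down the way you describe. Second, your ``cleanest route'' conflates non-splitness of the sequence with indecomposability of the bundle: it is true that $\mathcal E\cong A\oplus B$ forces $\partial=0$ in \eqref{eq:comseq}, but the converse you invoke -- that a non-split class has $\partial\neq 0$ -- fails whenever $h^1(A)=0$, which by \eqref{eq:h1A} holds throughout the range $\tfrac{3b-3}{2}\le k\le 2b-3$ (nonempty already for $b=4$, $k=5$, where $\dim Ext^1(B,A)=3$). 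So neither closing argument actually exhibits a single indecomposable bundle, which is precisely what your openness/semicontinuity strategy needs to get off the ground.

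The gap is closed by the computation the paper carries out in the proof of Lemma \ref{lem:autE}(ii), whose only real input is $e\neq 0$: for $k\ge\tfrac{3b-3}{2}$ both $A-B$ and $B-A$ are non-effective, and the coboundary $H^0(\Oc)\to H^1(A-B)$ arising from \eqref{eq:al-betwist2} sends $1$ to the extension class $e$ itself, hence is injective for $e\neq 0$; then \eqref{eq:al-betwist2}, \eqref{eq:al-betwist3} and \eqref{eq:al-betwist} give $h^0(\mathcal E\otimes\mathcal E^{\vee})=1$. Thus every non-split extension is simple, hence indecomposable -- a statement stronger than (ii) and one that makes the closed-locus/dimension-count apparatus unnecessary. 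I recommend you replace your part (ii) with this simplicity argument (or cite it from Lemma \ref{lem:autE}, taking care to note that its proof does not presuppose indecomposability).
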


In \S\,\ref{S:genpoint}, we shall also need to know $\dim(Aut(\mathcal E)) = h^0(\mathcal E \otimes \mathcal E^{\vee})$.

\begin{lemma}\label{lem:autE} With hypotheses as in Assumptions \ref{ass:AB},
\begin{equation}\label{eq:autE}
h^0(\mathcal E \otimes \mathcal E^{\vee}) = \left\{
\begin{array}{ccl}
6b-4k -5 & & b \leq k <   \frac{3b-3}{2}\\
1 & & k \geq \frac{3b-3}{2} \;\; {\rm for} \;\; \mathcal E \;\; {\rm indecomposable}.
\end{array}
\right.
\end{equation}
\end{lemma}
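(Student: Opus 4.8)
The plan is to split into the two ranges of the statement which, by Corollary~\ref{cor:dimExt1}, correspond respectively to $\mathcal E$ being decomposable, namely $\mathcal E\cong A\oplus B$, and to $\mathcal E$ being indecomposable; the former case is a direct computation, the latter requires a little more care.

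In the range $b\le k<\frac{3b-3}{2}$ one has $\mathcal E\cong A\oplus B$, hence
$$\mathcal E\otimes\mathcal E^{\vee}\cong\calo_{\FF_1}^{\oplus 2}\oplus\calo_{\FF_1}(A-B)\oplus\calo_{\FF_1}(B-A),$$
so it suffices to compute $h^0$ of the two non-trivial summands. As in the proof of Lemma~\ref{lem:ext1}, $A-B\equiv C_0+(3b-2k-4)f$, and $3b-2k-4\ge 0$ in the present range; the projection formula (exactly the computation in that proof) gives $h^0(\FF_1,A-B)=h^0(\Pp^1,\calo_{\Pp^1}(3b-2k-4))+h^0(\Pp^1,\calo_{\Pp^1}(3b-2k-5))=6b-4k-7$, while $(B-A)\cdot f=-1<0$ together with the base-point-freeness of $|f|$ forces $h^0(\FF_1,B-A)=0$. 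Summing, $h^0(\mathcal E\otimes\mathcal E^{\vee})=2+(6b-4k-7)=6b-4k-5$.

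For $k\ge\frac{3b-3}{2}$ and $\mathcal E$ indecomposable I would apply $\mathrm{Hom}(\mathcal E,-)$ to \eqref{eq:al-be}, getting the left-exact sequence
$$0\to\mathrm{Hom}(\mathcal E,A)\to\mathrm{Hom}(\mathcal E,\mathcal E)\to\mathrm{Hom}(\mathcal E,B).$$
First, $\mathrm{Hom}(\mathcal E,A)=0$: a nonzero $\psi\colon\mathcal E\to A$ restricts on $A\subset\mathcal E$ to a scalar $\lambda\in\mathrm{Hom}(A,A)=\mathbb C$; if $\lambda\ne0$ then $\lambda^{-1}\psi$ is a retraction splitting \eqref{eq:al-be}, contradicting indecomposability, and if $\lambda=0$ then $\psi$ factors through $\mathcal E/A\cong B$, i.e. through $\mathrm{Hom}(B,A)\cong H^0(\FF_1,A-B)$, which vanishes since in this range $(A-B)\cdot C_0=3b-2k-5\le-2<0$ forces the irreducible curve $C_0$ into the base locus of any effective divisor of $|A-B|$, leaving the non-effective residual class $(3b-2k-4)f$. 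Next, $\dim\mathrm{Hom}(\mathcal E,B)\le1$: as $\mathcal E$ has rank two, $\mathcal E^{\vee}\cong\mathcal E\otimes\calo_{\FF_1}(-A-B)$, so $\mathcal E^{\vee}\otimes B\cong\mathcal E\otimes\calo_{\FF_1}(-A)$, and tensoring \eqref{eq:al-be} by $\calo_{\FF_1}(-A)$ gives $0\to\calo_{\FF_1}\to\mathcal E\otimes\calo_{\FF_1}(-A)\to\calo_{\FF_1}(B-A)\to0$, hence $h^0(\mathcal E\otimes\calo_{\FF_1}(-A))\le1+h^0(\FF_1,B-A)=1$ (again using $(B-A)\cdot f<0$). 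Thus $\mathrm{Hom}(\mathcal E,\mathcal E)$ embeds into the at most one-dimensional space $\mathrm{Hom}(\mathcal E,B)$; since $\mathrm{id}_{\mathcal E}$ maps to the nonzero projection $\mathcal E\to B$, we conclude $h^0(\mathcal E\otimes\mathcal E^{\vee})=\dim\mathrm{Hom}(\mathcal E,\mathcal E)=1$.

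The only non-formal step, and the main obstacle, is the vanishing $\mathrm{Hom}(\mathcal E,A)=0$: this is precisely where indecomposability is used, through the dichotomy on $\lambda$, and it hinges on $\mathrm{Hom}(B,A)=0$, which holds exactly because of the numerical hypothesis $k\ge\frac{3b-3}{2}$. Everything else reduces to routine bookkeeping with the projection formula and intersection theory on $\FF_1$.
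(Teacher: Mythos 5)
Your proof is correct and follows essentially the same route as the paper: case (i) uses the identical decomposition of $\mathcal E\otimes\mathcal E^{\vee}$, and in case (ii) the sequence you get by applying $\mathrm{Hom}(\mathcal E,-)$ to \eqref{eq:al-be} is exactly the paper's sequence \eqref{eq:al-betwist}, since $\mathcal E^{\vee}\otimes A\cong\mathcal E(-B)$ and $\mathcal E^{\vee}\otimes B\cong\mathcal E(-A)$. Your retraction/factorization dichotomy proving $\mathrm{Hom}(\mathcal E,A)=0$ is an elementary rephrasing of the paper's observation that the coboundary $H^0(\Oc)\to H^1(A-B)$ is injective because it sends $1$ to the nonzero extension class, so the two arguments coincide in substance.
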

\begin{proof} (i) According to Corollary \ref{cor:dimExt1}, for $k < \frac{3b-3}{2}$, $\mathcal E = A \oplus B$.
Therefore
$$\mathcal E \otimes \mathcal E^{\vee} \cong \Oc^{\oplus 2} \oplus (A-B) \oplus (B-A).$$From Assumptions \ref{ass:AB},
$(B-A) \equiv - C_0 + (2k-3b+4) f$ so it is not effective, since it negatively intersects the irreducible, moving curve $f$.

On the contrary, $(A-B) \equiv C_0 + (3b-2k-4) f$. As in the proof of Lemma \ref{lem:ext1},
$h^0(\FF_1, C_0 + (3b-2k-4) f) = h^0(\Oc_{\Pp^1} (3b-2k-4)) + h^0(\Oc_{\Pp^1} (3b-2k-5))$.
\noindent

$\bullet$ Observe that, for $k <  \frac{3b-5}{2}$, $\Oc_{\Pp^1} (3b-2k-4)$ and $\Oc_{\Pp^1} (3b-2k-5)$ are both effective. So
$h^0(\Oc_{\Pp^1} (3b-2k-4)) + h^0(\Oc_{\Pp^1} (3b-2k-5)) = 6b-4k-7$; taking into account also
$h^0(\Oc^{\oplus 2})$, we conclude in this case.
\noindent

$\bullet$ For $k = \frac{3b-4}{2}$, $\Oc_{\Pp^1} (3b-2k-4) \cong \Oc_{\Pp^1}$ whereas $\Oc_{\Pp^1} (3b-2k-5) \cong \Oc_{\Pp^1}(-1)$; therefore,
$h^0(\FF_1, C_0 + (3b-2k-4) f) = 1$, so $h^0(\mathcal E \otimes \mathcal E^{\vee}) = 3 = 6b-4k -5$.
\vskip 5pt

\noindent

(ii) Assume now $k \geq \frac{3b-3}{2}$. From Corollary \ref{cor:dimExt1}, the general vector bundle $\mathcal E \in Ext^1(B,A)$ is indecomposable. Using the fact that $\mathcal E $ is of rank two and fits in the exact sequence \eqref{eq:al-be}, we have
$$\mathcal E^{\vee} \cong \mathcal E \otimes \Oc(-A-B),$$since $c_1(\mathcal E) = A+B$. Therefore, tensoring \eqref{eq:al-be} by $\mathcal E^{\vee}$ gives
\begin{equation}\label{eq:al-betwist}
0 \to \mathcal E (-B) \to \mathcal E \otimes \mathcal E^{\vee} \to \mathcal E (-A) \to 0.
\end{equation}One has $h^0(\mathcal E \otimes \mathcal E^{\vee}) \geq 1$, since scalar multiplication
always determines an automorphism of $\mathcal E$. We want to show that equality holds.

To do this, we want to compute both $h^0(\mathcal E (-B))$ and $h^0(\mathcal E (-A))$.

To compute the first, tensor \eqref{eq:al-be} by $\Oc(-B)$, and  get
\begin{equation}\label{eq:al-betwist2}
0 \to (A-B) \to \mathcal E (-B) \to \Oc \to 0.
\end{equation} Since $k \geq \frac{3b-3}{2}$, the same computations used in part (i) of the proof
show that in this case $A-B$ is not effective. Furthermore, observe that the coboundary map $$H^0(\Oc) \stackrel{\partial}{\longrightarrow} H^1(A-B),$$arising from \eqref{eq:al-betwist2}, has to be injective since it corresponds to the choice of the extension class
$e \in Ext^1(B,A)$ associated to $\mathcal E$; in other words, also $h^0( \mathcal E (-B)) = 0$.

To compute  $h^0(\mathcal E (-A))$, tensor \eqref{eq:al-be} by $\Oc(-A)$ and get
\begin{equation}\label{eq:al-betwist3}
0 \to \Oc \to \mathcal E (-A) \to (B-A) \to 0.
\end{equation}As in part (i) of the proof, $B-A$ is not effective. Therefore, from \eqref{eq:al-betwist3}, we get
$h^0(\mathcal E (-A)) = 1$.

From \eqref{eq:al-betwist}, we deduce also $h^0(\mathcal E \otimes \mathcal E^{\vee}) \leq 1$, proving
\eqref{eq:autE} in this case.
\end{proof}

\subsection{Computation of $h^1(\mathcal E)$}\label{S:h1E}
The main result of this subsection (cf. Theorem \ref{thm:van})
is about  the non-speciality of the (general) vector bundle $\mathcal E$ as in \eqref{eq:al-be}, under suitable numerical assumptions.

To start with, recall that, from Proposition \ref{prop:comEAB}, $h^1(B) = 0$, for any $k \geq b$.
We now compute $h^1(A)$.

\begin{lemma}\label{lem:h1A} With hypotheses as in Assumptions \ref{ass:AB}, one has
\begin{equation}\label{eq:h1A}
h^1(A) = \left\{
\begin{array}{ccc}
0 & & b \leq k \leq 2b-3 \\
1 & & k = 2b-2 \\
3k-6b+6 & & k \geq 2b-1.
\end{array}
\right.
\end{equation}
\end{lemma}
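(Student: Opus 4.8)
The plan is to compute $h^1(\FF_1, A)$ directly, exploiting the fact that $A \num 2C_0 + (2b-k-2)f$ lies on the ruled surface $\FF_1 = \Pp(\Oc_{\Pp^1}\oplus\Oc_{\Pp^1}(-1))$, so that all cohomology can be pushed down to $\Pp^1$ via the Leray spectral sequence. Since $A$ restricted to a fiber $f$ has degree $C_0\cdot A = 2$ and $A$ has fiber-degree $2$, the projection formula gives $\pi_*(\Oc_{\FF_1}(A)) = \mathrm{Sym}^2(\Oc_{\Pp^1}\oplus\Oc_{\Pp^1}(-1)) \otimes \Oc_{\Pp^1}(2b-k-2)$, while $R^1\pi_*(\Oc_{\FF_1}(A)) = 0$ because the fiber-degree is nonnegative. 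Computing $\mathrm{Sym}^2(\Oc_{\Pp^1}\oplus\Oc_{\Pp^1}(-1)) = \Oc_{\Pp^1}\oplus\Oc_{\Pp^1}(-1)\oplus\Oc_{\Pp^1}(-2)$, I would therefore get
\[
h^1(\FF_1,A) = h^1(\Oc_{\Pp^1}(2b-k-2)) + h^1(\Oc_{\Pp^1}(2b-k-3)) + h^1(\Oc_{\Pp^1}(2b-k-4)),
\]
and similarly $h^0(\FF_1,A)$ as the corresponding sum of $h^0$'s. This is exactly the same technique already used in the proofs of Lemma \ref{lem:ext1} and Lemma \ref{lem:autE}, so it should go through cleanly.

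The next step is the elementary bookkeeping on $\Pp^1$: by Serre duality $h^1(\Oc_{\Pp^1}(m)) = h^0(\Oc_{\Pp^1}(-m-2)) = \max(0, -m-1)$. Setting $m = 2b-k-2, 2b-k-3, 2b-k-4$ and letting $t := k - 2b$, the three contributions become $\max(0, t+1)$, $\max(0, t+2)$, $\max(0, t+3)$. Summing and splitting into the ranges $t \leq -3$ (equivalently $k \leq 2b-3$), $t = -2$ ($k = 2b-2$), $t = -1$ ($k = 2b-1$) and $t \geq 0$ ($k \geq 2b$) reproduces exactly the four-line formula in \eqref{eq:h1A}: $0$, then $1$, then $3$, and in general $3t+6 = 3k-6b+6$ once all three summands are positive (which one checks is already the correct value at $t=-1$, explaining why the last two cases merge into the single bound $k\geq 2b-1$). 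One should also note for consistency that Corollary \ref{cor:comEAB} gives $h^0(A) = 6b-3k-6 + h^1(A)$, and in the range $b\leq k \leq 2b-3$ this forces $h^0(A) = 6b-3k-6 \geq 0$, matching the direct $\Pp^1$-computation.

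There is essentially no hard obstacle here — the only point requiring a little care is making sure the fiber-degree of $A$ is correctly identified so that $R^1\pi_* = 0$ and the symmetric-power formula applies; once that is pinned down, the rest is the routine case analysis on line bundles over $\Pp^1$ sketched above. An alternative, if one prefers to avoid the symmetric-power computation, is to use the exact sequence $0 \to \Oc_{\FF_1}(A - C_0) \to \Oc_{\FF_1}(A) \to \Oc_{C_0}(A\cdot C_0) \to 0$ together with $A - C_0 \num C_0 + (2b-k-2)f$, reducing to the rank-two pushforward already handled in Lemma \ref{lem:ext1}, and then iterating; this gives the same three $\Pp^1$-summands and is perhaps the most self-contained route given what has already been established in the paper.
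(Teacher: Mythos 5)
Your proposal is correct, and the final bookkeeping on $\Pp^1$ does reproduce \eqref{eq:h1A} exactly (note that the value $3$ at $k=2b-1$ agrees with $3k-6b+6$, so the last two of your four ranges merge as you say). However, your route is genuinely different from the paper's. The paper argues case by case with three separate tools: for $b\le k\le 2b-3$ it writes $A\num K_{\FF_1}+A'$ with $A'\equiv 4C_0+(2b-k+1)f$ and invokes Kodaira vanishing (for $k\le 2b-4$, where $A'$ is ample) and Kawamata--Viehweg (for $k=2b-3$, where $A'\equiv 4C_0+4f$ is only big and nef); for $k=2b-2$ it notes $A\equiv 2C_0$ and chases the structure sequences of $C_0$ to land on $h^1(\Oc_{\Pp^1}(-2))=1$; and for $k\ge 2b-1$ it shows $A$ is not effective (via $AC_0\le -3$ and the fact that $C_0$ cannot be a fixed component) and then reads off $h^1(A)$ from the Riemann--Roch identity \eqref{eq:h0A}. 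Your single pushforward computation, $\pi_*(\Oc_{\FF_1}(A))\cong \Oc_{\Pp^1}(2b-k-2)\oplus\Oc_{\Pp^1}(2b-k-3)\oplus\Oc_{\Pp^1}(2b-k-4)$ with $R^1\pi_*=0$ because $A\cdot f=2$, is precisely the decomposition the paper itself uses later in the proof of Theorem \ref{thm:van} (cf.\ \eqref{eq:esp1}--\eqref{eq:esp2}), so it is fully consistent with the paper's conventions; it is more uniform and arguably cleaner, handling all three ranges of $k$ at once and giving $h^0(A)$ for free as a consistency check against Corollary \ref{cor:comEAB}. What the paper's argument buys instead is some geometric information that is reused elsewhere (e.g.\ the non-effectivity of $A$ for $k\ge 2b-1$ is quoted again in \S\,\ref{reinterpretation}), which your purely cohomological computation does not record explicitly.
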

\begin{proof} (i) Consider the case $ k \leq 2b-3$. To prove $h^1(A) = 0$, we can write $A \num K_{\FF_1} + A'$, where $A' \equiv 4 C_0 + (2b-k + 1) f$.
From \cite[Cor. 2.18, p. 380]{H}, $A'$ is ample if and only if $k \leq 2b-4$. From the Kodaira vanishing theorem, $h^1(\FF_1, A) = 0$. For $k = 2b-3$, $A' \equiv 4 C_0 + 4 f$ which is effective and such that $(A')^2 = 4$, therefore it is big and nef. From the Kawamata-Viehweg vanishing theorem, $h^1(\FF_1, A) = 0$ also in this case.
\vskip 5pt

\noindent

(ii) For  $k = 2b-2$, we have $A \equiv 2 C_0$, which is effective. Thus, from the exact sequences
$$0 \to \Oc_{\FF_1} (C_0) \to \Oc_{\FF_1}(2C_0) \to \Oc_{C_0}(2C_0) \cong \Oc_{\Pp^1}(-2) \to 0$$and
$$0 \to \Oc_{\FF_1} \to \Oc_{\FF_1} (C_0) \to \Oc_{C_0}(C_0) \cong \Oc_{\Pp^1}(-1) \to 0$$we immediately get
that $h^1(A) = h^1(\Oc_{\FF_1}(2C_0)) = h^1(\Oc_{\Pp^1}(-2)) = 1$.
\vskip 5pt

\noindent

(iii) For $k \geq 2b-1$, we claim that $A$ is not effective. Indeed, $AC_0 = 2b-k-4$ and since $k \geq 2b-1$, then $AC_0 \leq -3$; $C_0$ cannot be a fixed component of $|A|$, since $A-C_0 \equiv C_0 + (2b-k-2)f$ is not effective, as it is clear from $(A-C_0) C_0 = 2b-k-3 \leq -2$. Therefore $h^0(A) = 0$, so we conclude from \eqref{eq:h0A}.
\end{proof}

We can now prove the main result of this subsection.

\begin{theo}\label{thm:van} Let  $\mathcal E \in Ext^1(B,A)$ be as in Assumptions \ref{ass:AB}. If
\begin{itemize}
\item[(i)]  $b \leq k \leq 2b-3,$
\item [ ] or
\item[(ii)]  $2b-2 \leq k \leq 4b-1$ and  $\mathcal E \in Ext^1(B,A)$ general,
\end{itemize}
 then $h^1(\mathcal E) = 0.$
\end{theo}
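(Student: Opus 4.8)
The plan is to treat the two cases separately. For case (i), $b \le k \le 2b-3$, nothing is needed beyond Lemma \ref{lem:h1A} and \eqref{eq:h1Eh1A}: the former gives $h^1(A)=0$, and the latter then forces $h^1(\mathcal E)=0$. So the substance is case (ii). Here I would first read off from \eqref{eq:comseq}, using $h^1(B)=0$ (Proposition \ref{prop:comEAB}), that $h^1(\mathcal E)=h^1(A)-\operatorname{rk}(\partial)$, where $\partial\colon H^0(B)\to H^1(A)$ is the coboundary map of \eqref{eq:al-be}, equivalently cup product with the extension class $e\in Ext^1(B,A)\cong H^1(A-B)$. Thus everything reduces to surjectivity of $\partial$ for general $e$; and since $e\mapsto h^1(\mathcal E_e)$ is upper semicontinuous on the vector space $Ext^1(B,A)$, it suffices to exhibit a single $e$ for which $\partial$ is onto.

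The next step is to push the situation down to $\Pp^1$ along $\pi\colon\FF_1\to\Pp^1$. Restricting \eqref{eq:al-be} to an arbitrary fibre $f$ gives $0\to\mathcal O_f(2)\to\restrict{\mathcal E}{f}\to\mathcal O_f(1)\to 0$, which splits; hence $\restrict{\mathcal E}{f}\cong\mathcal O_f(2)\oplus\mathcal O_f(1)$ for \emph{every} fibre, so $R^1\pi_*\mathcal E=0$, $\pi_*\mathcal E$ is a rank-$5$ bundle, and $h^1(\FF_1,\mathcal E)=h^1(\Pp^1,\pi_*\mathcal E)$ by Leray. Pushing \eqref{eq:al-be} forward (using $R^1\pi_*A=0$ as well) yields $0\to\mathcal G\to\pi_*\mathcal E\to\mathcal H\to 0$ on $\Pp^1$, with $\mathcal G=\bigoplus_{i=0}^{2}\mathcal O_{\Pp^1}(2b-k-2-i)$ and $\mathcal H=\mathcal O_{\Pp^1}(k-b+2)\oplus\mathcal O_{\Pp^1}(k-b+1)$; since $h^1(\mathcal H)=h^1(B)=0$, the coboundary $\partial\colon H^0(\mathcal H)\to H^1(\mathcal G)$ of this sequence has cokernel $h^1(\mathcal E)$ and is Leray-identified with the $\partial$ of the first step. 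Writing $\pi_*\mathcal O_{\FF_1}(C_0)=\mathcal O_{\Pp^1}\oplus\mathcal O_{\Pp^1}(-1)$ and identifying $Ext^1(B,A)=H^1(\mathcal O_{\Pp^1}(3b-2k-4))\oplus H^1(\mathcal O_{\Pp^1}(3b-2k-5))$, one computes that in suitable bases $\partial$ becomes
\[
(f,g)\longmapsto\bigl(e_1\cdot f,\ e_1\cdot g+e_2\cdot f,\ e_2\cdot g\bigr)\ \in\ \bigoplus_{i=0}^{2}H^1\bigl(\mathcal O_{\Pp^1}(2b-k-2-i)\bigr),
\]
where $e=(e_1,e_2)$ and each $e_i\cdot(-)$ is a multiplication map $H^1(\mathcal O_{\Pp^1}(a))\otimes H^0(\mathcal O_{\Pp^1}(c))\to H^1(\mathcal O_{\Pp^1}(a+c))$.

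The third step controls these multiplication maps. The basic fact I would use is that for $a\le -2$, $c\ge 0$ and $\xi\in H^1(\mathcal O_{\Pp^1}(a))$ general, the map $s\mapsto\xi\cdot s$ has maximal rank, hence is surjective exactly when $-a-2\le 2c$; this follows from Serre duality, under which it is transpose to polynomial multiplication $\mathbb C[x,y]_{-a-c-2}\otimes\mathbb C[x,y]_{c}\to\mathbb C[x,y]_{-a-2}$ paired against $\xi$, together with a dimension count of the locus of ``obstructing'' forms. Applied to the diagonal blocks $e_1\cdot(-)$ and $e_2\cdot(-)$, the inequalities involved are automatic for $b\ge 4$, so for general $e$ those two blocks already surject onto $H^1(\mathcal O_{\Pp^1}(2b-k-2))$ and $H^1(\mathcal O_{\Pp^1}(2b-k-4))$. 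It then remains to see that the off-diagonal contribution $(f,g)\mapsto e_1\cdot g+e_2\cdot f$ sends $\ker(e_1\cdot(-))\oplus\ker(e_2\cdot(-))$ onto the middle summand $H^1(\mathcal O_{\Pp^1}(2b-k-3))$; a dimension count shows this is numerically possible precisely for $k\le 4b-1$, and I expect this genuine interaction of the two blocks to be the main obstacle — to be handled either by producing one explicit good pair $(e_1,e_2)$ and invoking the semicontinuity set up above, or by an apolarity argument for the genericity of the catalecticants of $e_1$ and $e_2$. The endpoint $k=2b-2$ is easier: there $h^1(A)=1$ and the target of $\partial$ is one-dimensional, so one only needs $\partial\ne 0$, which is clear from the $e_2\cdot g$ block alone. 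Finally the bound $k\le 4b-1$ is sharp, since for $k\ge 4b$ one has $h^0(\mathcal H)<h^1(\mathcal G)$.
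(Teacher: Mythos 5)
Your reduction of case (ii) to the surjectivity of the coboundary $\partial_e\colon H^0(B)\to H^1(A)$ for a single $e$, and the translation via $\pi_*$ into the block map $(f,g)\mapsto(e_1\cdot f,\,e_1\cdot g+e_2\cdot f,\,e_2\cdot g)$, is a legitimate and correctly set-up strategy (and your sharpness remark for $k\ge 4b$ matches Lemma \ref{lem:h0Bh1A}). But the proof is not complete: the entire content of the theorem in the range $2b-1\le k\le 4b-1$ sits in the step you yourself flag as ``the main obstacle,'' namely that the off-diagonal map $(f,g)\mapsto e_1\cdot g+e_2\cdot f$ carries $\ker(e_1\cdot(-))\oplus\ker(e_2\cdot(-))$ onto the middle summand $H^1(\mathcal O_{\Pp^1}(2b-k-3))$, which has dimension $k-2b+2>0$ throughout that range and is reached \emph{only} through this interaction term. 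Maximal rank of a single general cup-product $\xi\cdot(-)$ on $\Pp^1$ is indeed classical (general binary catalecticants have maximal rank), but that handles only the two diagonal blocks; for the middle block you must control a \emph{sum} of two such maps restricted to the kernels of two \emph{other} maps built from the same pair $(e_1,e_2)$, and ``numerically possible precisely for $k\le 4b-1$'' is a dimension count, not a proof that the generic pair achieves it. Neither of your two suggested remedies (an explicit good pair, or an apolarity/catalecticant genericity argument) is carried out, so the theorem is not proved for $k\ge 2b-1$, i.e.\ for almost all of case (ii).

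For contrast, the paper avoids this interaction problem entirely: it pushes the whole extension down to $\Pp^1$, notes that $\pi_*\mathcal E$ is a rank-five bundle of degree $4b-k-6$ which, for the general extension class, has splitting type as balanced as possible, and then reads off $h^1(\Pp^1,\pi_*\mathcal E)=h^1(\FF_1,\mathcal E)=0$ directly from the balanced $(\alpha_1,\dots,\alpha_5)$ in each residue class of $4b-k-6$ modulo $5$ --- the bound $k\le 4b-1$ appearing exactly as the condition $\min_i\alpha_i\ge -1$ in the worst case. If you want to salvage your route, the cleanest fix is precisely this: instead of proving surjectivity of $\partial_e$ block by block, show that some $e$ yields the balanced pushforward (or invoke semicontinuity of $h^1(\pi_*\mathcal E_e)$ on $Ext^1$ together with the existence of a non-special rank-five bundle of that degree arising as such an extension), which simultaneously certifies the surjectivity you are after.
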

\begin{proof} (i) For  $b \leq k \leq 2b-3$, the statement  follows directly from \eqref{eq:h1Eh1A} and
Lemma \ref{lem:h1A}.
\noindent

(ii) For $2b-2 \leq k \leq 4b-1$, consider the exact sequence \eqref{eq:al-be} and the natural morphism
$\pi : \FF_1 \to \Pp^1$. From assumptions \eqref{eq:al-be3}, applying $\pi_*$ to \eqref{eq:al-be} gives
the exact sequence of vector bundles on $\Pp^1$
\begin{equation}\label{eq:esp1}
{\small
\begin{array}{rlll}
0 \to & \pi_*(A) \cong Sym^2(\Oc_{\Pp^1} \oplus \Oc_{\Pp^1}(-1)) \otimes \Oc_{\Pp^1}(2b-k-2) & \to &  \pi_*(\mathcal E) \\
\to & \pi_*(B) \cong (\Oc_{\Pp^1} \oplus \Oc_{\Pp^1}(-1)) \otimes \Oc_{\Pp^1}(k-b+2) & \to & 0.
\end{array}
}
\end{equation}

\noindent

By standard computations on symmetric powers of vector bundles, \eqref{eq:esp1} gives
\begin{equation}\label{eq:esp2}
{\small
\begin{array}{rlll}
0 \to & \Oc_{\Pp^1} (2b-k-2) \oplus \Oc_{\Pp^1}(2b-k-3) \oplus  \Oc_{\Pp^1}(2b-k-4) & \to &  \pi_*(\mathcal E) \\
\to & \Oc_{\Pp^1} (k-b+2) \oplus \Oc_{\Pp^1}(k-b+1) & \to & 0.
\end{array}
}
\end{equation}Leray's isomorphisms give bijective correspondences between extensions classes as well as their cohomological behaviour: in other words, $$h^i(\FF_1, \mathcal E) \cong h^i(\Pp^1, \pi_*(\mathcal E)), \; i \geq 0,$$ as well as the cohomological class corresponding to the general extension $\mathcal E \in Ext^1(B, A)$ on $\FF_1$ is isomorphic
to the cohomological class corresponding to the general extension $\pi_*(\mathcal E) \in Ext^1(\Oc_{\Pp^1} (k-b+2) \oplus \Oc_{\Pp^1}(k-b+1),  \Oc_{\Pp^1} (2b-k-2) \oplus \Oc_{\Pp^1}(2b-k-3) \oplus  \Oc_{\Pp^1}(2b-k-4))$ on $\Pp^1$.

In particular, $\pi_*(\mathcal E)$ is a rank-five vector bundle on $\Pp^1$, with
\begin{equation}\label{eq:degfico}
\deg(\pi_*(\mathcal E)) = 4b-k-6.
\end{equation}Since we are on $\Pp^1$, $\pi_*(\mathcal E)$ is decomposable (cf. e.g. \cite[Thm. 2.1.1]{OSS}), i.e.
\begin{equation}\label{eq:degfico2}
\pi_*(\mathcal E) = \bigoplus_{i=1}^5 \Oc_{\Pp^1}(\alpha_i),
\end{equation}for some $\alpha_i \in \mathbb{Z}$, $1 \leq i \leq 5$, such that from \eqref{eq:degfico}
$$\Sigma_{i=1}^5 \alpha_i = 4b-k-6.$$ If $\mathcal E \in Ext^1(B,A)$ (equivalently $\pi_*(\mathcal E) \in Ext^1(\Oc_{\Pp^1} (k-b+2) \oplus \Oc_{\Pp^1}(k-b+1),  \Oc_{\Pp^1} (2b-k-2) \oplus \Oc_{\Pp^1}(2b-k-3) \oplus  \Oc_{\Pp^1}(2b-k-4))$) is general,  then
$(\alpha_1, \ldots, \alpha_5)$ must be as balanced as possible (cf. e.g. \cite[\S\;2]{OSS}).

Therefore, letting $4b-k-6 \equiv \epsilon \pmod{5},$ we have to consider five different cases according to the values of the integer $0 \leq \epsilon \leq 4$.

\begin{itemize}
\item[$\epsilon = 0$] In this case, $\pi_*(\mathcal E)$ general implies $\alpha_i= \frac{4b-k-6}{5}$, for any $1 \leq i \leq 5$; in other words $\pi_*(\mathcal E) = \Oc_{\Pp^1}^{\oplus 5}(\frac{4b-k-6}{5})$. Therefore, $h^1(\pi_*(\mathcal E)) = 5 \, h^1( \Oc_{\Pp^1}(\frac{4b-k-6}{5}))$ and the latter equals zero as soon as $k < 4b+4$, which is part of our numerical assumptions.
\item[$\epsilon = 1$] In this case, we can write $4b-k-6 = 5 h + 1$, where $h = \frac{4b-k-7}{5}$. The generality assumptions on $\pi_*(\mathcal E)$ implies that (up to a permutation of the integers $\alpha_i$)
$$\alpha_1 = \alpha_2 = \alpha_3 = \alpha_4 = h = \frac{4b-k-7}{5}$$and$$\alpha_5 = h+1 = \frac{4b-k-7}{5} + 1.$$Therefore, $$h^1(\pi_*(\mathcal E)) = 4 h^1 \left( \Oc_{\Pp^1}(\frac{4b-k-7}{5})\right) + h^1\left( \Oc_{\Pp^1}(\frac{4b-k-7}{5} +1) \right);$$each summand on the right-hand-side of the equality is zero as soon as $k < 4b+3$, which is consistent with our numerical assumptions.
\item[$\epsilon = 2$] As in the previous case, $4b-k-6 = 5 h + 2$, with $h = \frac{4b-k-8}{5}$: The generality assumption implies
$$\alpha_1 = \alpha_2 = \alpha_3 = \frac{4b-k-8}{5}$$ and $$\alpha_4 = \alpha_5 = \frac{4b-k-8}{5} + 1.$$This gives $h^1(\pi_*(\mathcal E)) = 0$ as soon as $k < 4b+2$, which is consistent with our numerical assumptions.
\item[$\epsilon = 3$] Same computations as above give $$\alpha_1 = \alpha_2 = \frac{4b-k-9}{5} \;\;  {\rm and} \;\; \alpha_3 = \alpha_4 = \alpha_5 = \frac{4b-k-9}{5} + 1.$$This gives $h^1(\pi_*(\mathcal E)) = 0$ as soon as $k < 4b+1$, which is consistent with our numerical assumptions.
\item[$\epsilon = 4$] Finally, in this case we have $$\alpha_1 = \frac{4b-k-10}{5} \;\;  {\rm and} \;\; \alpha_2 = \alpha_3 = \alpha_4 = \alpha_5 = \frac{4b-k-10}{5} + 1.$$This gives $h^1(\pi_*(\mathcal E)) = 0$ as soon as $k < 4b$, which is consistent with our numerical assumptions.
\end{itemize}
\end{proof}
\section{Interpretation via elementary transformations and via projective geometry}
\label{reinterpretation}
In this section, we present  interpretations of  Theorem \ref{thm:van}-(ii) in terms of both elementary transformations of vector bundles on $\FF_1$
as well as in terms of projective geometry of suitable Segre varieties.
At first sight the content of this section may look as if it is not part of the main stream of results contained in this work. Nonetheless, the exploration of consequences of Theorem \ref{thm:van} under different perspectives is useful and of general interest, as the approaches presented here shed light on intrinsic behaviors of cohomological classes of line bundles on $\FF_1.$

To discuss this, recall that from Lemma \ref{lem:h1A}, for $k \geq 2b-2$ one has $h^1(A) \neq 0$,
so \eqref{eq:h1Eh1A} does not imply the non-speciality of $\mathcal E$. Indeed, when $\mathcal E = A \oplus B$, which correspond to the trivial element of $Ext^1(B,A)$, it is clear that $\mathcal E$ has the same speciality of $A$. On the other hand, we have the following simple observation, which gives another motivation for the numerical hypotheses in Theorem \ref{thm:van} - (ii).
\begin{lemma}\label{lem:h0Bh1A} If $b \leq k \leq 4b-1$, $h^0(B) \geq h^1(A)$.
\end{lemma}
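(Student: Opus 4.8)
The plan is to prove the inequality $h^0(B) \geq h^1(A)$ by playing off the explicit formulas already established against each other, splitting into the same numerical ranges that appear in Lemma \ref{lem:h1A}. From Corollary \ref{cor:comEAB} we have $h^0(B) = 2k - 2b + 5$, and this quantity is manifestly positive and increasing in $k$ throughout the range $b \leq k \leq 4b-1$ (indeed $h^0(B) \geq 7$ since $k \geq b$). So the whole content of the lemma is to bound $h^1(A)$ from above by $2k-2b+5$ in each of the three regimes of Lemma \ref{lem:h1A}.

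First I would dispatch the two easy ranges. For $b \leq k \leq 2b-3$ we have $h^1(A) = 0$ by Lemma \ref{lem:h1A}, so the inequality is trivial since $h^0(B) > 0$. For $k = 2b-2$ we have $h^1(A) = 1$, and $h^0(B) = 2(2b-2) - 2b + 5 = 2b+1 \geq 9 > 1$, so again we are done. The only range requiring actual comparison is $k \geq 2b-1$, where Lemma \ref{lem:h1A} gives $h^1(A) = 3k - 6b + 6$.

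In this last range the claim $h^0(B) \geq h^1(A)$ reads $2k - 2b + 5 \geq 3k - 6b + 6$, i.e. $4b - 1 \geq k$. This is exactly the hypothesis $k \leq 4b-1$ of the lemma, so the inequality holds, with equality precisely when $k = 4b-1$. This is the step that "uses up" the upper bound on $k$, and it explains — as the surrounding text promises — why the numerical ceiling $k \leq 4b-1$ appears in Theorem \ref{thm:van}-(ii): it is exactly the threshold past which $h^1(A)$ would outgrow $h^0(B)$, so that the coboundary map $\partial$ in \eqref{eq:comseq} could no longer possibly be surjective for a general extension class.

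There is no real obstacle here: the entire proof is a two-line case check against formulas \eqref{eq:h0B} and \eqref{eq:h1A}. The only thing to be careful about is bookkeeping — making sure the three sub-cases $b \leq k \leq 2b-3$, $k = 2b-2$, and $k \geq 2b-1$ exactly tile the interval $b \leq k \leq 4b-1$ (which they do), and noting that in the boundary case $k = 4b-1$ one gets equality rather than strict inequality, which is consistent with the statement. I would present it compactly, perhaps noting explicitly the equality case since it is the conceptually relevant endpoint for the later application.
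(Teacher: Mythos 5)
Your proof is correct and follows essentially the same route as the paper's: compare $h^0(B)=2k-2b+5$ with the three-case formula for $h^1(A)$ from Lemma \ref{lem:h1A}, the only nontrivial case being $k\geq 2b-1$, where the inequality reduces exactly to $k\leq 4b-1$. (One inessential slip: for $k\geq b$ one gets $h^0(B)\geq 5$, not $\geq 7$; this aside plays no role in the argument.)
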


\begin{proof} From \eqref{eq:h0B}, we have that $h^0(B) = 2k-2b+5$ for any $k \geq b$. On the other hand,
$h^1(A)$ is given by \eqref{eq:h1A}.
\noindent

(i) For $b \leq k \leq 2b-3$, one trivially has $h^0(B) = 2k-2b+5 \geq 0 = h^1(A)$.

\noindent

(ii) For $k= 2b-2$, $h^0(B) = 2b+1 > 1 = h^1(A)$, since $b \geq 4$ from Assumptions \ref{ass:AB}.
\noindent

(iii) For $ k \geq 2b-1 $,  $h^0(B) = 2k-2b+5 \geq 3k-6b+6 = h^1(A)$ if and only if $k \leq 4b-1$.
\end{proof}

Because by \eqref{eq:comEAB} $B$ is always non-special, from \eqref{eq:comseq} and Lemma \ref{lem:h0Bh1A}, the non-speciality  of $\mathcal E$ is equivalent to the surjectivity  of the induced coboundary map $\partial$ as in \eqref{eq:comseq}. Therefore, from Theorem \ref{thm:van} - (ii), we have the following interesting consequence:

\begin{cor}\label{cor:van} Let $2b-2 \leq k \leq 4b-1$ and let $\mathcal E \in Ext^1(B,A)$ be a general vector bundle as in Assumptions \ref{ass:AB}. Then, the coboundary map $$\partial_e : H^0(B) \to H^1(A)$$ as in \eqref{eq:comseq}, induced by the general choice $e \in Ext^1(B,A)$ corresponding to $\mathcal E$, is surjective.
\end{cor}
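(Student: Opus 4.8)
The plan is to read the statement off directly from the long exact cohomology sequence \eqref{eq:comseq} attached to \eqref{eq:al-be}, feeding in the vanishing results already at our disposal. Recall from Proposition \ref{prop:comEAB} that $h^1(\FF_1, B) = 0$ for every $k \geq b$. Hence the relevant portion of \eqref{eq:comseq} is
$$H^0(B) \stackrel{\partial_e}{\longrightarrow} H^1(A) \longrightarrow H^1(\mathcal E) \longrightarrow 0,$$
which shows that ${\rm coker}(\partial_e) \cong H^1(\FF_1, \mathcal E)$, where $e \in Ext^1(B,A)$ is the extension class defining $\mathcal E$. Consequently, surjectivity of $\partial_e$ is \emph{equivalent} to the non-speciality condition $h^1(\FF_1, \mathcal E) = 0$.

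Now restrict to $2b-2 \leq k \leq 4b-1$ and take $\mathcal E \in Ext^1(B,A)$ general. In this range Lemma \ref{lem:h0Bh1A} gives $h^0(B) \geq h^1(A)$, so that surjectivity of $\partial_e$ is at least numerically unobstructed; but the genuine input is Theorem \ref{thm:van}-(ii), which asserts precisely that the general such $\mathcal E$ satisfies $h^1(\FF_1, \mathcal E) = 0$. Combining this with the previous paragraph yields the corollary.

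Thus there is essentially no new obstacle here: the substantive work has already been done in Theorem \ref{thm:van}-(ii), established by pushing forward to $\Pp^1$ via $\pi$ and using that the general rank-five bundle $\pi_*(\mathcal E)$ is as balanced as possible. The only mild point worth recording is that the identification of the cokernel of $\partial_e$ as $H^1(\FF_1,\mathcal E)$ and the translation to $\Pp^1$ rely on the naturality of Leray's isomorphism with respect to the connecting homomorphisms — i.e. that $\partial_e$ corresponds under Leray to the coboundary of the pushed-down sequence \eqref{eq:esp2} on $\Pp^1$ — but this is exactly the compatibility already invoked in the proof of Theorem \ref{thm:van}-(ii), so nothing further is needed.
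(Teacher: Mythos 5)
Your proposal is correct and follows essentially the same route as the paper: both identify $\mathrm{coker}(\partial_e)$ with $H^1(\FF_1,\mathcal E)$ via the exact sequence \eqref{eq:comseq} (using $h^1(B)=0$ from Proposition \ref{prop:comEAB}) and then quote Theorem \ref{thm:van}-(ii) for the vanishing of $h^1(\mathcal E)$ for general $e$. Your observation that Lemma \ref{lem:h0Bh1A} only guarantees numerical consistency while the real input is Theorem \ref{thm:van}-(ii) matches the paper's own discussion preceding the corollary.
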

\begin{proof} First of all, observe that $2b-2 > \frac{3b-3}{2}$ as $b \geq 4,$ by Assumptions \ref{ass:AB}.
Therefore, from Corollary \ref{cor:dimExt1}, the general element $e \in Ext^1(B,A) $ corresponds to an
indecomposable vector bundle $\mathcal E$ on $\FF_1$ fitting in an exact sequence as follows:
$$e:\;\;\; 0 \to A \to \mathcal E \to B \to 0.$$From Theorem \ref{thm:van}-(ii) we know that for
$2b-2 \leq k \leq 4b-1$ and $e \in Ext^1(B,A)$ general, it is $h^1(\mathcal E) = 0.$ The discussion above shows that this is
equivalent to the surjectivity of $\partial_e$.
\end{proof}
The surjectivity of $\partial_e$, for $e \in Ext^1(B,A)$ general and $2b-2 \leq k \leq 4b-1$, is strictly related
to intrinsic behaviours of some cohomological classes of line bundles on $\FF_1$. Indeed, let $A$ and $B$ be as in \eqref{eq:al-be3}. One has a natural cup-product between cohomological classes:
\begin{equation}\label{eq:cup}
H^0(B) \otimes H^1(A-B) \stackrel{\cup}{\longrightarrow} H^1(A), \;\; \sigma \otimes e \to \sigma \cup e,
\end{equation}for any $\sigma \in H^0(B)$ and any $e \in H^1(A-B)$. The cup product induces natural linear maps; precisely, for any fixed $e \in H^1(A-B)$, one has
\begin{equation}\label{eq:cupe}
H^0(B) \stackrel{ - \cup e}{\longrightarrow} H^1(A), \;\; \sigma  \to \sigma \cup e,
\end{equation}whereas, for any fixed $\sigma \in H^0(B)$, one has
\begin{equation}\label{eq:cups}
H^1(A-B) \stackrel{ \sigma \cup -}{\longrightarrow} H^1(A), \;\; e  \to \sigma \cup e.
\end{equation}

The canonical isomorphism $H^1(A-B) \cong Ext^1(B,A)$ implies that, for any $e \in Ext^1(B,A)$, $$\partial_e = - \cup e.$$ Similarly, let us set $$\Phi_{\sigma} = \sigma \cup -.$$ Following \cite[p. 31]{Frie}, $Ext^1(B,A)$ parametrizes {\em strong isomorphism classes} of extensions of line bundles. Therefore, given $ e \in Ext^1(B,A)$, it corresponds to an extension class as in \eqref{eq:al-be}. It is clear that $ e= 0$ corresponds to $A\oplus B$. Since  $H^1(A) \cong Ext^1(\Oc, A)$, then $\partial_e(\sigma) = \sigma\cup e$ corresponds to an extension class of $\Oc$ with $A$.

By \cite[pp. 41-42]{Frie} (cf. also \cite{Bo} and \cite{H-L}),  the maps
$\partial_e$ and $\Phi_{\sigma}$ can be read in terms of {\em elementary transformations} on $\FF_1$.
Precisely, fix $e \in Ext^1(B,A)$; given $0 \neq \sigma \in H^0(B)$ and using the exact sequence \eqref{eq:al-be}, we have the following diagram:
\begin{displaymath}
\begin{array}{rcccccc}
      &   &     &            &     & 0 & \\
              &   &     &            &     & \downarrow & \\
           &   &     &            &     & \Oc & \\
      &   &     &            &     & \downarrow^{\sigma} & \\
(e): \;\;\;0 \to & A & \to & \mathcal E & \to & B & \to 0 \\
     &   &     &            &     & \downarrow & \\

     &   &     &            &     & N_{C_{\sigma}/\FF_1} & \\
          &   &     &            &     & \downarrow & \\
         &   &     &            &     & 0 &
\end{array}
\end{displaymath}where $C_{\sigma} \in |B|$ is the vanishing locus of
$\sigma \in H^0(B)$, and $N_{C_{\sigma}/\FF_1} \cong \Oc_B(B) $. By composition, we have a surjective morphism  $\mathcal E \to\!\!\!\! \to \Oc_B(B)$ of sheaves on $\FF_1$; thus the previous diagram can be completed as follows:
 \begin{displaymath}
\begin{array}{rcccccc}

      &   &     &            &     & 0 & \\

              &   &     &            &     & \downarrow & \\

           &   &     &            &     & \Oc & \\

      &   &     &            &     & \downarrow^{\sigma} & \\

(e): \;\;\;0 \to & A & \to & \mathcal E & \to & B & \to 0 \\

     &   &     &       \downarrow^{\psi}    &     & \downarrow & \\

     &   &     0 \to &       \Oc_B(B)    &  \stackrel{id}{\longrightarrow}   & \Oc_B(B) & \to 0 \\

          &   &     &     \downarrow       &     & \downarrow & \\

          &   &     &           0 &     & 0 &

\end{array}
\end{displaymath}
 Let $\mathcal W_{\sigma} := ker(\psi)$. Since $C_{\sigma}$ is an effective divisor, $\Oc_{C_{\sigma}}(B) \cong \Oc_B(B)$ is a line-bundle on $C_{\sigma}$. If $\mathcal E$ is a rank-two vector bundle, then from \cite[Lemma 16, p. 41]{Frie}
$\mathcal W_{\sigma}$ is also locally free, of rank-two on $\FF_1$; its Chern classes are
$$c_1(\mathcal W_{\sigma}) = c_1(\mathcal E) - [C_{\sigma}], \;\; c_2(\mathcal W_{\sigma}) = c_2(\mathcal E) - c_1(\mathcal E) \cdot [C_{\sigma}] + j_*(\Oc_{C_{\sigma}}(B)),$$
where $[C_{\sigma}]\in H^2(\FF_1, \mathbb{Z})$ denotes the cycle defined by $C_{\sigma}$ and $j : C_{\sigma} \hookrightarrow \FF_1$ the natural inclusion. In particular, from Assumptions \ref{ass:AB}, $$ c_1(\mathcal W_{\sigma}) \equiv A \;\; {\rm and} \;\; c_2(\mathcal W_{\sigma})= 0.$$ This is in accordance with the fact that the previous
diagram can clearly be further completed as follows:
\begin{equation}\label{eq:diag}
\begin{array}{rcccccc}
      &  0 &     &    0        &     & 0 & \\

      & \downarrow  &     &     \downarrow       &     & \downarrow & \\

 0 \to   &  A &    \to  &         \mathcal W_{\sigma}   &   \to  & \Oc & \to 0 \\

      &  \downarrow &     &       \downarrow     &     & \downarrow^{\sigma} & \\

(e): \;\;\;0 \to & A & \to & \mathcal E & \to & B & \to 0 \\

     &   \downarrow &     &       \downarrow     &     & \downarrow & \\

     &   0 &      \to &       \Oc_B(B)    &  \stackrel{id}{\longrightarrow}   & \Oc_B(B) & \to 0 \\

          &   &     &     \downarrow       &     & \downarrow & \\

          &   &     &           0 &     & 0 & .

\end{array}
\end{equation} The above diagram  describes $\partial_e(\sigma) \in H^1(A)$ as
$$\partial_e(\sigma) : \;\;\; 0 \to A \to  \mathcal W_{\sigma} \to \Oc \to 0 .$$Note that, if $\partial_e(\sigma) = 0 \in H^1(A)$ - i.e. $\sigma \in ker(\partial_e)$ -
then $\mathcal W_{\sigma} = A \oplus \Oc$. On the other hand, if $\sigma \in \!\!\!\!\!| \;\; ker(\partial_e)$,
then $\mathcal W_\sigma$ is a non-trivial extension class in $Ext^1(\Oc,A)$.

Similarly, to describe $\Phi_{\sigma}$, for a fixed $0 \neq \sigma \in H^0(B)$, one has an exact sequence as the right-hand-side column of diagram \eqref{eq:diag}. Therefore for any $e \in Ext^1(B,A)$, which gives rise to an exact sequence $(e)$ as the middle row of diagram \eqref{eq:diag}, $\Phi_{\sigma}(e)$ is defined as the first row of diagram \eqref{eq:diag}.

The discussion conducted above implies that $\Phi{\sigma}$ is surjective, for any $\sigma \in H^0(B)$. Indeed, from \eqref{eq:h0B} $B$ is effective, thus the exact sequence defining $B$ in $\FF_1$ tensored by $A$ gives
$$0 \to A-B \to A \to \Oc_B(A) \to 0.$$Now $\Oc_B(A) \cong \Oc_{\Pp^1}(k)$, since $B$ is rational being a unisecant of $\FF_1$. Moreover,
$AB = c_2(\mathcal E) = k \geq 2b-2 \geq 6$ by assumption, thus $h^1(\Oc_B(A)) = 0$, which implies the surjectivity of $\Phi_{\sigma}$.
Since $H^1(A-B) \cong Ext^1(B,A)$ and $H^1(A) \cong Ext^1(\Oc, A)$ canonically, the associated cohomology sequence gives
$$Ext^1(B,A) \stackrel{\Phi_{\sigma}}{\to} Ext^1(\Oc, A) \to 0,$$for any $\sigma \in H^0(B)$; in other words, for any $\sigma \in H^0(B)$, any extension class in $Ext^1(\Oc, A)$ is obtained as an elementary transformation along the curve $C := C_{\sigma}$ of some extension in  $Ext^1(B,A)$. In particular  when $k \geq 2b-1$, from Lemma \ref{lem:h1A}-(iii), $A$ is not effective, so $ker(\Phi_{\sigma}) \cong H^0(\Oc_B(A)) \cong \mathbb{C}^{k+1}$, for any $\sigma \in H^0(B)$. In other words,
for any $v \in Ext^1(\Oc, A)$, there is a $(k+1)$-dimensional family of extensions in $Ext^1(B,A)$ inducing $v$ via $\Phi_{\sigma}$.

Similarly, Corollary \ref{cor:van} implies that, when $k \geq b$,  as soon as $h^0(B) \geq h^1(A)$ and  $e \in Ext^1(B,A)$ is general, any  extension class in $Ext^1(\Oc, A)$ is obtained as an elementary transformation of $\mathcal E$ along some divisor $C_{\sigma} \subset \FF_1$, for some $\sigma \in H^0(B)$. In particular  when $k \geq 2b-1$, from Lemma \ref{lem:h1A}-(iii), $A$ is not effective, so $ker(\partial_e) \cong H^0(\mathcal E) \subset H^0(B)$. In other words, for a general $e,$ for any $v \in Ext^1(\Oc, A)$ there is a sub-linear system $\Lambda \subset |B|$ of curves
on $\FF_1$ of dimension $h^0(\mathcal E) -1$, inducing the same extension class $v$ via $\partial_e$.

Other independent, interesting consequences of Corollary \ref{cor:van} can be highlighted.
As already observed, $\partial_e$ is induced by the natural cup-product \eqref{eq:cup}. This
can be interpreted via linear projections of suitable projective varieties. Indeed, let
$$\Sigma = \Pp^{2k-2b+4} \times \Pp^{4k-6b+6} = \Pp(H^0(B)) \times \Pp(H^1(A-B))$$be the corresponding {\em Segre variety} and let $M := h^0(B) h^1(A-B) - 1$. Denote by $p_{\partial}$ the projectivization of the cup-product \eqref{eq:cup}; thus  $p_{\partial}$ is the composition
\begin{displaymath}
\begin{array}{ccc}
\Sigma & \hookrightarrow & \Pp^M \\

    & \searrow^{p_{\partial}} & \downarrow^{\Pi} \\

    &    & \Pp^{3k-6b+5}
\end{array}
\end{displaymath}where $\Pp^{3k-6b+5} = \Pp(H^1(A))$ and $\Pi$ is a linear projection. Let $\Xi \subset \Pp^M$ be the center of the projection
$\Pi$, thus $\Xi \cong \Pp^{M - 3k + 6b - 6}$. What Theorem \ref{thm:van} and Corollary \ref{cor:van} establish is that $\Xi$ intersects
the general ruling $\Pp^{2k-2b+4}$ of the Segre variety $\Sigma$ in the expected dimension $4b-2-k$; in other words, the restriction  $\Pi|_{\Pp^{2k-2b+4}}$ to the general ruling $\Pp^{2k-2b+4}$ of $\Sigma$ is surjective onto $\Pp^{3k-6b+5}$.

\section{$3$-dimensional scrolls over $\FF_1$ and their Hilbert schemes} \label{S:scrollsF1}
In this section, results from \S\,\ref{S:vbF1} are applied to the study of suitable $3$-dimensional
scrolls over $\FF_1$ in projective spaces and some components of their Hilbert schemes.

Assume from now on that $\mathcal E$ is a very-ample rank-two vector bundle on $\FF_1$. Notice that
the choice of the numerical class of $c_1(\mathcal E)$ together with the very-ampleness hypothesis, naturally lead to
Assumptions \ref{ass:AB}. Indeed, recall the following necessary condition for very-ampleness:
\begin{prop}\label{prop:AB} (see \cite[Prop. 7.2]{al-be}) Let $\mathcal E$ be a very-ample, rank-two vector bundle over $\FF_1$ such that
$$c_1(\mathcal E) \num 3 C_0 + b f \; \; {\rm and} \;\; c_2(\mathcal E) = k.$$Then $\mathcal E$
satisfies all the hypotheses in Assumptions \ref{ass:AB} and moreover
\begin{equation}\label{eq:bounds}
h^0(\FF_1 , \mathcal E) := n+1 \geq 7.
\end{equation}
\end{prop}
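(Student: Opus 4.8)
The classes $c_1(\mathcal E)\num 3C_0+bf$ and $c_2(\mathcal E)=k$ are given, so the plan is to establish, in order: (a) the exact sequence \eqref{eq:al-be} with $A,B$ as in \eqref{eq:al-be3}; (b) the inequalities $k\ge b\ge 4$; and (c) the bound $h^0(\FF_1,\mathcal E)\ge 7$. The common tool is that the restriction of a very ample line bundle to a closed subvariety is again very ample: applied to $\taut{\mathcal E}$ on the subvarieties $\Pp(\mathcal E|_D)\subset\Pp(\mathcal E)$, this means $\mathcal E|_D$ is a very ample bundle on every curve $D\subset\FF_1$.

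First I would restrict to the fibres of $\pi:\FF_1\to\Pp^1$. A rank-two bundle on $\Pp^1$ is very ample precisely when both its splitting summands have degree $\ge 1$; since $c_1(\mathcal E)\cdot f=3$, this forces $\mathcal E|_f\cong\Oc_{\Pp^1}(1)\oplus\Oc_{\Pp^1}(2)$ for \emph{every} fibre $f$. Hence the Harder--Narasimhan filtration of $\mathcal E$ along the fibres of $\pi$ has constant type, and the relative Harder--Narasimhan filtration over $\Pp^1$ produces a sub-line-bundle $A\hookrightarrow\mathcal E$ that restricts to the degree-$2$ summand on each fibre, with locally free quotient $B:=\mathcal E/A$ restricting to $\Oc_{\Pp^1}(1)$. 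Thus $A\cdot f=2$ and $B\cdot f=1$, so $A\num 2C_0+\beta f$ and $B\num C_0+\gamma f$ for some $\beta,\gamma\in\mathbb Z$; the identities $c_1(\mathcal E)=A+B$ and $k=c_2(\mathcal E)=A\cdot B$ then read $\beta+\gamma=b$ and $\beta+2\gamma=k+2$, yielding $\beta=2b-k-2$ and $\gamma=k-b+2$, which is exactly \eqref{eq:al-be3}. This settles (a).

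For (b): since $\mathcal E$ is very ample it is globally generated, hence so is the quotient $B$, so $B$ is nef and $0\le B\cdot C_0=k-b+1$; if equality held, $B|_{C_0}\cong\Oc_{C_0}$ would be a line-bundle quotient of $\mathcal E|_{C_0}\cong\Oc_{\Pp^1}(p)\oplus\Oc_{\Pp^1}(q)$, which forces $\min(p,q)\le 0$ and contradicts the very ampleness of $\mathcal E|_{C_0}$; hence $k\ge b$. Very ampleness of $\mathcal E|_{C_0}$ gives moreover $p,q\ge 1$, so $b-3=c_1(\mathcal E)\cdot C_0=p+q\ge 2$, i.e. $b\ge 5$, in particular $b\ge 4$. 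Together with (a), this shows $\mathcal E$ satisfies Assumptions \ref{ass:AB}; consequently the cohomological results of \S\,\ref{S:vbF1} apply, and in particular $h^2(\FF_1,\mathcal E)=h^3(\FF_1,\mathcal E)=0$, so $h^0(\FF_1,\mathcal E)\ge\chi(\mathcal E)=4b-k-1$.

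Finally, for (c): when $b\le k\le 2b-3$ this last inequality already gives $h^0(\FF_1,\mathcal E)\ge 2b+2\ge 10$. For $k\ge 2b-2$ one has to use very ampleness of $\mathcal E$ more deeply --- essentially to control $h^0(\FF_1,\mathcal E)$ directly, via an upper bound on $k$ in terms of $b$ --- and this is carried out in \cite[Prop.~7.2]{al-be}: the point is that no single curve pins this down, and one must test the very ampleness of $\taut{\mathcal E}$ against the whole family of unisecant curves on $\FF_1$ simultaneously; the resulting bound on $k$ yields $h^0(\FF_1,\mathcal E)\ge 7$. I expect this last step --- the sharp numerical restriction on $k$ imposed by very ampleness --- to be the main obstacle, the exact sequence \eqref{eq:al-be} and the lower bounds $k\ge b$, $b\ge 4$ being comparatively soft.
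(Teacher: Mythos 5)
The paper offers no proof of this proposition at all: the statement is imported verbatim from \cite[Prop.~7.2]{al-be} and the citation is the entire justification, so there is no internal argument to compare yours against. Measured on its own terms, your reconstruction of the structural part is correct and is the standard one. Restricting $\taut{\mathcal E}$ to $\Pp(\mathcal E|_f)$ forces $\mathcal E|_f\cong\oofp{1}{1}\oplus\oofp{1}{2}$ on every fibre (a fact the paper itself invokes, without proof, inside the proof of Proposition \ref{Hilbertscheme of scrollover F1}); since $\mathrm{Hom}(\oofp{1}{2},\oofp{1}{1})=0$ the destabilizing subsheaf is canonical on each fibre, so the relative Harder--Narasimhan filtration does produce a sub-line-bundle $A$ with locally free quotient $B$, and the two identities $A+B=c_1(\mathcal E)$, $A\cdot B=c_2(\mathcal E)$ pin down the classes \eqref{eq:al-be3} exactly as you compute. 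Your derivation of $k\ge b$ (nefness of the globally generated quotient $B$ on $C_0$, plus exclusion of the boundary case $B\cdot C_0=0$ because $\Oc_{C_0}$ admits no surjection from $\oofp{1}{p}\oplus\oofp{1}{q}$ with $p,q\ge1$) and of $b\ge 5$ from very ampleness of $\mathcal E|_{C_0}$ is also sound, and gives slightly more than the $b\ge4$ required by Assumptions \ref{ass:AB}.

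The one genuine gap is exactly where you flag it: the bound \eqref{eq:bounds}. Your estimate $h^0(\FF_1,\mathcal E)\ge\chi(\mathcal E)=4b-k-1$ settles $h^0\ge7$ only when $k\le 4b-8$, and in particular in your range $k\le 2b-3$; for $k\ge 2b-2$ you give no argument and defer back to \cite[Prop.~7.2]{al-be}. Note that this cannot be repaired by more Riemann--Roch: for large $k$ the Euler characteristic is small or negative while $h^1(\mathcal E)$ need not vanish for an arbitrary very ample $\mathcal E$, so some genuinely global consequence of very ampleness of the embedded threefold $X=\Pp(\mathcal E)\subset\Pp^n$ is required, and that is precisely the content of the cited result. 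So as a self-contained proof the proposal is incomplete on \eqref{eq:bounds} --- though it is incomplete in the same way the paper is, and everything you do prove is correct and consistent with how the proposition is used later in the text.
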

 A few remarks are in order. First of all, from Corollary \ref{cor:dimExt1}-(i), when $b \leq k < \frac{3b-3}{2}$, $\mathcal E$ splits as $\mathcal E = A \oplus B$; therefore $\mathcal E$ is very ample if and only if both  line bundles $A$, $B$ as in Assumptions \ref{ass:AB} are very-ample.  From \cite[Thm. 2.17, p. 379, and Cor. 2.18, p. 380]{H} a sufficient condition for the very-ampleness of both $A$ and $B$ is $b \geq 5$.

Secondly, as \eqref{eq:bounds} requires $h^0(\mathcal E) \geq 7$, from \eqref{eq:h0E} and Theorem \ref{thm:van}, we will also assume $b \leq k \leq 4b-8$.

In other words, from now on Assumptions \ref{ass:AB} are replaced by:
\begin{ass}\label{ass:AB2} Let $\mathcal E$ be a very-ample, rank-two vector bundle over
$\FF_1$ such that $$c_1(\mathcal E) \num 3 C_0 + b f, \;\; c_2(\mathcal E) = k, \;\; {\rm with} \;\;\; 5 \leq b \leq k \leq 4b-8.$$
\end{ass}
Under these assumptions it follows that $ \mathcal E$
fits in an exact sequence as
\begin{equation}\label{eq:al-beb}
0 \to A \to \mathcal E \to B \to 0,
\end{equation}where $A$ and $B$ are such that
\begin{equation}\label{eq:al-be3b}
A \num 2 C_0 + (2b-k-2) f \;\; {\rm and} \;\; B \num C_0 + (k - b + 2) f.
\end{equation}
see for instance \cite[Proof of Proposition 7.2]{al-be}.

With this set up, let $\scrollcal{E}$ be the associated 3-dimensional scroll over $\FF_1,$
and let $\pi: \FF_1 \to \Pin{1}$ and $\varphi: \Pp(\mathcal E) \to  \FF_1$ be the usual projections.
Denote by $L := \Oc_{\Pp(\mathcal E)}(1)$ its tautological line-bundle.

\begin{prop}\label{prop:X} Let $\mathcal E$ be as in Assumptions \ref{ass:AB2}. Moreover,
when $\frac{3b-3}{2} \leq k \leq 4b-8$ we further assume that  $\mathcal E \in Ext^1(B,A)$ is general.
Then $L$ defines an embedding
\begin{equation}\label{eq:X}
\Pp(\mathcal E) \stackrel{|L|}{\longrightarrow} X \subset \Pin{n},
\end{equation} where $X$ is a smooth, projective  $3$-fold scroll over $\FF_1$, non-degenerate in
$\Pp^n$, of degree $d$ with
\begin{equation}\label{eq:nd}
n = 4b-k-2 \geq 6 \;\;\; {\rm and} \;\;\; d = 6b-9-k
\end{equation}and such that
\begin{equation}\label{eq:van}
h^i(X, L) = 0, \;\; i \geq 1.
\end{equation}
\end{prop}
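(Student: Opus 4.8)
The plan is to establish the three assertions --- that $|L|$ is very ample and embeds $\Pp(\mathcal E)$ as a non-degenerate $3$-fold in $\Pp^n$, the numerical formulas for $n$ and $d$, and the vanishing $h^i(X,L)=0$ for $i\geq 1$ --- essentially by transporting cohomological information about $\mathcal E$ on $\FF_1$ up to $\Pp(\mathcal E)$ via the projection $\varphi$. First I would record that very-ampleness of $L$ on $\Pp(\mathcal E)$ is by definition equivalent to very-ampleness of $\mathcal E$ on $\FF_1$, which is exactly the standing hypothesis in Assumptions \ref{ass:AB2} (with the extra generality hypothesis on the extension class invoked when $\tfrac{3b-3}{2}\leq k\leq 4b-8$, so that Corollary \ref{cor:dimExt1} and the surrounding discussion guarantee such a very-ample general $\mathcal E$ exists and is indecomposable). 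Thus $|L|$ embeds $X:=\Pp(\mathcal E)$, and since $\mathcal E$ is a rank-two bundle on a surface, $X$ is a smooth irreducible $3$-fold; it is a scroll over $\FF_1$ in the sense of Definition \ref{specialvar} by the standard structure result (\cite[Prop. 14.1.3]{BESO}).

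Next I would compute the invariants. By the projection formula and $R^i\varphi_*L=0$ for $i>0$, one has $h^0(X,L)=h^0(\FF_1,\mathcal E)$, hence $n+1 = h^0(\FF_1,\mathcal E)$. Under the numerical range $5\leq b\leq k\leq 4b-8$ Theorem \ref{thm:van} applies (both case (i) and case (ii), using generality of $\mathcal E$ in the latter), giving $h^1(\FF_1,\mathcal E)=0$; feeding this into \eqref{eq:h0E} yields $h^0(\FF_1,\mathcal E)=4b-k-1$, so $n=4b-k-2$, and the inequality $k\leq 4b-8$ gives $n\geq 6$. For the degree, I would use \eqref{eq:d}: $d=L^3=c_1^2(\mathcal E)-c_2(\mathcal E)$. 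From Assumptions \ref{ass:AB}, $c_1(\mathcal E)\equiv 3C_0+bf$, so $c_1^2(\mathcal E) = 9C_0^2 + 6bC_0f = -9+6b = 6b-9$ using $C_0^2=-1$, $C_0f=1$, $f^2=0$; subtracting $c_2(\mathcal E)=k$ gives $d=6b-9-k$. Non-degeneracy of $X$ in $\Pp^n$ is automatic since $L$ is the complete tautological embedding ($H^0(X,L)$ spans).

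Finally, the vanishing \eqref{eq:van}. The key identity is the Leray spectral sequence for $\varphi:\Pp(\mathcal E)\to\FF_1$ together with $\varphi_*\Oc_{\Pp(\mathcal E)}(1)=\mathcal E$ and $R^j\varphi_*\Oc_{\Pp(\mathcal E)}(1)=0$ for $j\geq 1$ (the fibers are $\Pp^1$, so higher direct images of the relative $\Oc(1)$ vanish); this degenerates the spectral sequence and gives $H^i(X,L)\cong H^i(\FF_1,\mathcal E)$ for all $i\geq 0$. Then $h^1(X,L)=h^1(\FF_1,\mathcal E)=0$ by Theorem \ref{thm:van}, while $h^2(X,L)=h^2(\FF_1,\mathcal E)=0$ and $h^3(X,L)=h^3(\FF_1,\mathcal E)=0$ by Proposition \ref{prop:comEAB} (the latter also for dimension reasons, since $\dim\FF_1=2$). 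This completes all three assertions.

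The main obstacle, and the only step carrying real content, is the appeal to Theorem \ref{thm:van} to get $h^1(\FF_1,\mathcal E)=0$ across the full range $b\leq k\leq 4b-8$ --- in particular checking that the constraint $k\leq 4b-8$ sits inside the hypothesis $k\leq 4b-1$ of Theorem \ref{thm:van}-(ii), which it does, and that the generality assumption imposed here (for $\tfrac{3b-3}{2}\leq k$) is compatible with very-ampleness, i.e. that a Zariski-dense subset of $Ext^1(B,A)$ simultaneously yields very-ample and non-special bundles; everything else (the projection-formula identifications, the Chern-class arithmetic for $d$, the reading off of $h^0$ from \eqref{eq:h0E}) is routine.
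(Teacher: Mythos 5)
Your proposal is correct and follows essentially the same route as the paper's proof: very-ampleness of $L$ is read off from that of $\mathcal E$, the degree comes from \eqref{eq:d} and the Chern class arithmetic, the vanishing \eqref{eq:van} comes from Leray's isomorphisms together with Proposition \ref{prop:comEAB} and Theorem \ref{thm:van}, and $n=4b-k-2\geq 6$ comes from \eqref{eq:h0E}, Theorem \ref{thm:van} and $k\leq 4b-8$. You merely spell out the Chern-class computation and the spectral-sequence degeneration more explicitly than the paper does, and the compatibility issue you flag at the end (generality vs.\ very-ampleness) is likewise left implicit in the paper, since both are taken as hypotheses.
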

\begin{proof} By Assumptions \ref{ass:AB2}, the very-ampleness of $\mathcal E$ is equivalent to that of $L$.

The formula on the degree of $X$ follows from \eqref{eq:d} and \eqref{eq:al-be3b}.

Condition \eqref{eq:van} follows from Leray's isomorphisms, Proposition \ref{prop:comEAB} and Theorem
\ref{thm:van}.

Finally, since $n+1 = h^0(X,L) = h^0(\FF_1, \mathcal E)$, then $n+1 \geq 7$ (as in \eqref{eq:bounds}) follows from
\eqref{eq:h0E}, Theorem \ref{thm:van} and the fact that $k \leq 4b-8$.
\end{proof}

In what follows, we will be interested in studying the Hilbert scheme parametrizing subvarieties of $\Pp^n$ having the same Hilbert polynomial of $X$.

\subsection{Basics on Hilbert Scheme}\label{S:hilb}
The existence of the Hilbert scheme parametrizing closed subschemes of $\Pin{n}$
with given Hilbert polynomial was established by Grothendieck, \cite{groth}. The following formulation of his basic result is due to
Sommese, \cite{SO1}.
\begin{prop}[\cite{groth}, \cite{SO1}]
\label{basic fact} Let $Z$ be a smooth connected projective variety.
Let $X$ be a connected subvariety which is a local complete intersection in $Z$ and with $H^1(X,N)=0$ where
$N := N_{X/Z}$ is the normal bundle of $X$ in $Z$.
Then there exist irreducible projective varieties ${\mathcal Y}$ and
${\mathcal H}$ with the following properties:
\begin{itemize}
\label{properties}
  \item[(i)]
${\mathcal Y} \subset {\mathcal H}\times Z$ and the map
$p:{\mathcal Y}\lra {\mathcal H}$ induced
by the product projection is a flat surjection,
  \item[(ii)]  there is a smooth point $x\in {\mathcal H}$
with $p$ of maximal rank in a neighborhood of $p^{-1}(x)$,
  \item[(iii)]  $q$ identifies  $p^{-1}(x)$ with $X$ where
$q:{\mathcal Y}\lra Z$ is the map induced
by the product projection, and
\item[(iv)] $H^0(N)$ is naturally identified with $T_{{\mathcal H},x}$
where
$T_{{\mathcal H},x}$ is the Zariski tangent space of ${\mathcal H}$ at
$x$.
\end{itemize}
\end{prop}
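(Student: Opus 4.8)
The plan is to deduce the statement from Grothendieck's construction of the Hilbert scheme together with the standard infinitesimal study of its local structure. First I would fix a very ample line bundle on $Z$ and let $P$ be the Hilbert polynomial of $X$ with respect to it. By \cite{groth}, there is a projective scheme $\mathrm{Hilb}^P(Z)$ carrying a universal family $\mathcal{W}\hookrightarrow \mathrm{Hilb}^P(Z)\times Z$, flat over $\mathrm{Hilb}^P(Z)$, whose fibre over each point is a closed subscheme of $Z$ with Hilbert polynomial $P$; call $h_0$ the point corresponding to $X$.

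The key step is the infinitesimal analysis at $h_0$. Since $X$ is a local complete intersection in the smooth variety $Z$, the normal sheaf $N:=N_{X/Z}$ is locally free, and the classical deformation theory of Hilbert schemes provides a canonical isomorphism $T_{h_0}\mathrm{Hilb}^P(Z)\cong H^0(X,N)$ together with an obstruction theory taking values in $H^1(X,N)$. The hypothesis $H^1(X,N)=0$ therefore forces $\mathrm{Hilb}^P(Z)$ to be smooth at $h_0$, of dimension $h^0(X,N)$. In particular $h_0$ lies on a unique irreducible component, which I would take, with its reduced structure (harmless near $h_0$, where it is already reduced), as the variety $\mathcal{H}$; being a component of a projective scheme, $\mathcal{H}$ is projective, and it is irreducible by construction. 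This also yields (iv) at once, since near $h_0$ one has $\mathcal{H}=\mathrm{Hilb}^P(Z)$, whence $T_{\mathcal{H},h_0}=T_{h_0}\mathrm{Hilb}^P(Z)\cong H^0(X,N)$.

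For (i)--(iii), set $\mathcal{Y}:=\mathcal{W}\times_{\mathrm{Hilb}^P(Z)}\mathcal{H}\subset \mathcal{H}\times Z$, with $p:\mathcal{Y}\to\mathcal{H}$ and $q:\mathcal{Y}\to Z$ the two product projections; replacing $\mathcal{Y}$ by $\mathcal{Y}_{\mathrm{red}}$ if needed, this is an irreducible projective variety (it is dominated by the irreducible $\mathcal{H}$). Then $p$ is flat, being a base change of the flat morphism $\mathcal{W}\to\mathrm{Hilb}^P(Z)$, and it is surjective, since every point of $\mathcal{H}$ has a nonempty fibre (a subscheme of $Z$ with the same nonzero Hilbert polynomial as $X$); this is (i). The fibre $p^{-1}(h_0)$ is by construction the subscheme $X\subset\{h_0\}\times Z$, so $q$ restricts on it to the inclusion $X\hookrightarrow Z$, which is (iii). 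Finally, since $\mathcal{H}$ is smooth at $h_0$ and the fibres of $p$ near $h_0$ are deformations of $X$ in $Z$, hence of constant dimension $\dim X$ (flatness preserves Hilbert polynomials), $p$ is flat with equidimensional fibres in a neighbourhood of $p^{-1}(h_0)$; when $X$ is moreover smooth --- the only case needed in the applications of this paper --- flatness together with smoothness of the fibres makes $p$ a smooth morphism near $p^{-1}(h_0)$, i.e.\ of maximal rank there, giving (ii).

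The main obstacle is exactly the infinitesimal step: establishing the identification $T_{h_0}\mathrm{Hilb}^P(Z)\cong H^0(X,N)$ and the $H^1(X,N)$-valued obstruction theory, and thereby the smoothness of $\mathrm{Hilb}^P(Z)$ at $h_0$. This is where the local complete intersection hypothesis is essential: it guarantees that $N_{X/Z}$ is a vector bundle governing first-order embedded deformations and that the obstructions to lifting them are captured by $H^1(X,N)$ rather than by a finer cotangent-complex term. Everything else is formal bookkeeping with Grothendieck's representability theorem and flat base change, as in \cite{SO1}.
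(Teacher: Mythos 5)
The paper does not prove this proposition at all: it is quoted verbatim as a known result, with the existence and representability attributed to \cite{groth} and this particular formulation to \cite{SO1}, so there is no in-paper argument to compare yours against. Your sketch is exactly the standard proof underlying those references --- Grothendieck's representability with its universal family, the identification $T_{h_0}\mathrm{Hilb}^P(Z)\cong H^0(X,N)$, the obstruction space $H^1(X,N)$ for locally complete intersections, and smoothness at $h_0$ from the vanishing hypothesis --- and its main lines are sound. Two small caveats. First, your justification of the irreducibility of $\mathcal Y$ (``it is dominated by the irreducible $\mathcal H$'') is backwards and, as stated, not a proof: a scheme mapping onto an irreducible base need not be irreducible, since special fibres of the family may be reducible; one should instead take for $\mathcal Y$ the unique component of the restricted universal family that dominates $\mathcal H$ and contains $X$ (and note that taking reduced structures can in principle disturb flatness away from $h_0$, which is harmless for the local statements actually used). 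Second, your treatment of (ii) covers only smooth $X$; that is the only case the paper needs, but it is worth saying explicitly that ``maximal rank near $p^{-1}(x)$'' is being established there via generic smoothness of a flat family with smooth fibres over a base that is smooth at $x$.
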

Recall that, when $x \in {\mathcal H}$ is a smooth point, the corresponding subvariety $X_x \subset Z$ parametrized by $x$ is said to be {\em unobstructed} in $Z$.

\subsection{The irreducible component $\mathcal X$ of the Hilbert scheme containing [X]}\label{S:hilbX}
The scroll $X \subset \Pp^n$ as in Proposition \ref{prop:X} corresponds to a point $[X] \in \mathcal H$, where $\mathcal H$ denotes the Hilbert scheme parametrizing $3$-dimensional subvarieties of $\Pp^n$, of degree $d = 6b - k - 9$.
The next result shows that $X$ is unobstructed in $\Pp^n$.

\begin{prop}\label{Hilbertscheme of scrollover F1} There exists an irreducible component $\mathcal X \subseteq \mathcal H$, which is generically smooth and of (the expected) dimension
\begin{equation}\label{eq:expdim}
\dim(\mathcal X) =  n(n+1) + 3k - 2b - 2
\end{equation}such that $[X]$ belongs to the smooth locus of $\mathcal X$.
\end{prop}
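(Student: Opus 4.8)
The plan is to apply Proposition \ref{basic fact} to $Z = \Pp^n$ and $X = \Pp(\mathcal E)$ embedded by $|L|$ as in Proposition \ref{prop:X}. The key point is to compute $h^0(X, N_{X/\Pp^n})$ and $h^1(X, N_{X/\Pp^n})$, showing the latter vanishes (so that $[X]$ is a smooth point of $\mathcal H$ lying on a unique component $\mathcal X$, which is therefore generically smooth) and that the former equals the claimed expected dimension. First I would invoke the normal bundle sequence $0 \to T_X \to \restrict{T_{\Pp^n}}{X} \to N_{X/\Pp^n} \to 0$ together with the Euler sequence $0 \to \Oc_X \to \oof{X}{1}^{\oplus (n+1)} \to \restrict{T_{\Pp^n}}{X} \to 0$. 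From the Euler sequence and \eqref{eq:van} (i.e. $h^i(X,L)=0$ for $i \geq 1$) together with $h^1(\Oc_X) = h^1(\FF_1, \Oc_{\FF_1}) = 0$ and $h^2(\Oc_X)=0$, one gets $h^i(X, \restrict{T_{\Pp^n}}{X}) = 0$ for $i \geq 1$ and $h^0(X, \restrict{T_{\Pp^n}}{X}) = (n+1)h^0(X,L) - 1 = (n+1)^2 - 1$. Thus from the normal bundle sequence, $h^1(X, N_{X/\Pp^n})$ is controlled by $h^2(X, T_X)$ and $h^0(X, N_{X/\Pp^n})$ by $h^0(X, T_X)$ and $h^1(X, T_X)$, so the whole computation reduces to understanding the cohomology of $T_X$.

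Next I would analyze $T_X$ via the relative tangent sequence for the scroll projection $\varphi : X = \Pp(\mathcal E) \to \FF_1$, namely $0 \to T_{X/\FF_1} \to T_X \to \varphi^* T_{\FF_1} \to 0$, where $T_{X/\FF_1} \cong \oof{X}{2} \otimes \varphi^*(\det \mathcal E)^{\vee}$ by the relative Euler sequence $0 \to \Oc_X \to \varphi^*\mathcal E^{\vee} \otimes \oof{X}{1} \to T_{X/\FF_1} \to 0$ (using $\rk \mathcal E = 2$). Pushing forward by $\varphi$ and using $\varphi_* \oof{X}{1} = \mathcal E$, $R^i\varphi_* \oof{X}{1} = 0$ for $i>0$, and the projection formula, the cohomology of $T_{X/\FF_1}$ on $X$ is identified with the cohomology on $\FF_1$ of $\mathcal E \otimes \mathcal E^{\vee} \ominus \Oc_{\FF_1}$ (more precisely, from the relative Euler sequence pushed forward, $H^i(X, T_{X/\FF_1})$ sits in an exact sequence with $H^i(\FF_1, \mathcal E \otimes \mathcal E^{\vee})$ and $H^i(\FF_1, \Oc_{\FF_1})$), while $H^i(X, \varphi^* T_{\FF_1}) \cong H^i(\FF_1, T_{\FF_1})$. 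One then needs the cohomology of $T_{\FF_1}$ (standard: $h^0 = 6$ — equivalently $\dim \mathrm{Aut}(\FF_1)$ — and $h^i=0$ for $i>0$, since $\FF_1$ is a del Pezzo surface) and of $\mathcal E \otimes \mathcal E^{\vee}$, for which Lemma \ref{lem:autE} gives $h^0$ and, crucially, one must show $h^1(\FF_1, \mathcal E \otimes \mathcal E^{\vee}) = h^2(\FF_1, \mathcal E \otimes \mathcal E^{\vee}) = 0$ (at least for the general $\mathcal E$). The vanishing of the higher cohomology of $\mathcal E \otimes \mathcal E^{\vee}$ is obtained by twisting \eqref{eq:al-beb} by $\mathcal E^{\vee}$ as in \eqref{eq:al-betwist} and then twisting \eqref{eq:al-beb} by $\Oc(-A)$ and $\Oc(-B)$ as in \eqref{eq:al-betwist2} and \eqref{eq:al-betwist3}, reducing everything to the cohomology of the line bundles $A-B$, $B-A$ and $\Oc_{\FF_1}$, which are handled exactly by the arguments already used in Lemmata \ref{lem:ext1}, \ref{lem:autE} and \ref{lem:h1A} — this is where the numerical bounds $5 \le b \le k \le 4b-8$ and the generality of $\mathcal E$ enter.

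Assembling these pieces yields $h^1(X, T_X) = h^2(X, T_X) = 0$, hence $h^1(X, N_{X/\Pp^n}) = 0$, so by Proposition \ref{basic fact} the point $[X]$ is smooth on a unique irreducible component $\mathcal X \subseteq \mathcal H$, which is therefore generically smooth, and $\dim \mathcal X = h^0(X, N_{X/\Pp^n}) = \chi(N_{X/\Pp^n})$. Finally I would compute this Euler characteristic: from the two exact sequences above, $\dim \mathcal X = \left((n+1)^2 - 1\right) - h^0(X, T_X)$ with $h^0(X, T_X) = h^0(\FF_1, T_{\FF_1}) + \left(h^0(\FF_1, \mathcal E \otimes \mathcal E^{\vee}) - 1\right) = 6 + h^0(\mathcal E \otimes \mathcal E^{\vee}) - 1$; plugging in $h^0(\mathcal E \otimes \mathcal E^{\vee})$ from Lemma \ref{lem:autE} (in both ranges, $b \le k < \tfrac{3b-3}{2}$ where it is $6b-4k-5$, and $k \ge \tfrac{3b-3}{2}$ where it is $1$ for $\mathcal E$ indecomposable — and one checks the final formula is uniform) gives $\dim \mathcal X = (n+1)^2 - 1 - 6 - h^0(\mathcal E \otimes \mathcal E^{\vee}) + 1$, which after substituting $n = 4b-k-2$ and simplifying equals $n(n+1) + 3k - 2b - 2$, as claimed. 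The main obstacle is the global identification of $H^\bullet(X, T_X)$ with cohomology on $\FF_1$ and, within that, the vanishing $h^1 = h^2 = 0$ for $\mathcal E \otimes \mathcal E^{\vee}$ and the careful bookkeeping of the coboundary maps in the pushed-forward relative Euler sequence, so that both the vanishing $h^1(N)=0$ and the exact value of $h^0(N)$ come out cleanly across the two numerical regimes.
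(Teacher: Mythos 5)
Your strategy for the vanishing $h^i(X,N)=0$, $i\geq 1$, is essentially the paper's: Euler sequence plus \eqref{eq:van} and \eqref{quadratino} give $H^i(X,\restrict{T_{\Pp^n}}{X})=0$ for $i\geq 1$, hence $H^i(X,N)\cong H^{i+1}(X,T_X)$, and the relative tangent sequence of $\varphi$ reduces everything to $H^2(\FF_1, Sym^2\mathcal E\otimes\det\mathcal E^{\vee})=H^2(\FF_1,\mathcal E\otimes\mathcal E^{\vee})=0$ together with $H^i(\FF_1,T_{\FF_1})=0$. Your route to $h^2(\mathcal E\otimes\mathcal E^{\vee})=0$ --- filtering by \eqref{eq:al-betwist}, \eqref{eq:al-betwist2}, \eqref{eq:al-betwist3} and checking $h^2(A-B)=h^2(B-A)=h^2(\Oc)=0$ by intersecting with $f$ --- is correct and in fact cleaner than the paper's restriction-to-$f$-and-$C_0$ plus Serre-vanishing argument, and it needs no generality assumption.

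The dimension count, however, contains a genuine error: you assert $h^1(\FF_1,\mathcal E\otimes\mathcal E^{\vee})=0$ (equivalently $h^1(X,T_X)=0$) and then compute $\dim\mathcal X=(n+1)^2-1-h^0(X,T_X)$. This is false precisely in the interesting regime. Riemann--Roch gives $\chi(\mathcal E\otimes\mathcal E^{\vee})=4\chi(\Oc_{\FF_1})+c_1(\mathcal E)^2-4c_2(\mathcal E)=6b-4k-5$, so for $k\geq\frac{3b-3}{2}$ and $\mathcal E$ general, where $h^0(\mathcal E\otimes\mathcal E^{\vee})=1$ and $h^2=0$, one gets $h^1(\mathcal E\otimes\mathcal E^{\vee})=4k-6b+6>0$; generality of $\mathcal E$ cannot remove this, and it should not, since $X=\Pp(\mathcal E)$ genuinely deforms as the class in $Ext^1(B,A)$ varies, so $h^1(X,T_X)\neq 0$. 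Dropping this term, your formula yields $(n+1)^2-7$ in that regime, whereas the correct value is $(n+1)^2-1-6b+4k=n(n+1)+3k-2b-2$; the two agree only at $k=\frac{3b-3}{2}$, so the claimed uniformity fails. The repair is to keep Euler characteristics throughout: since all higher cohomology of $N$ and of $\restrict{T_{\Pp^n}}{X}$ vanishes, $h^0(N)=\chi(\restrict{T_{\Pp^n}}{X})-\chi(T_X)$ with $\chi(T_X)=\left(\chi(\mathcal E\otimes\mathcal E^{\vee})-1\right)+\chi(T_{\FF_1})=(6b-4k-6)+6$, and $\chi(\mathcal E\otimes\mathcal E^{\vee})=6b-4k-5$ \emph{is} uniform across both regimes --- it is $h^0$, not $\chi$, that jumps. (The paper sidesteps this entirely by computing $\chi(N)$ via Hirzebruch--Riemann--Roch from the Chern classes of $N$.)
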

\begin{proof} Let $N : = N_{X/\Pp^n}$ denote the normal bundle of $X$ in ${\mathbb P}^n$.
The statement will follow from Proposition \ref{basic fact} by showing that $H^i(X,N)=0$, $i \geq 1$, and conducting an
explicit computation of $h^0(X,N) = \chi(X, N).$

To do this, let
\begin{eqnarray}\label{eulersequscrollsuF1}
0\lra {\mathcal O}_{X} \lra {\mathcal O}_{X}(1)^{\oplus (n+1)} \lra T_{{{\mathbb P}^n}|{X}} \lra  0
\end{eqnarray}be the Euler sequence on ${\mathbb P}^n$ restricted to $X$. Since $(X,L)$ is a scroll over $\FF_1$,
\begin{eqnarray}
\label{quadratino}
H^{i}(X,{\mathcal O}_{X})= H^{i}(\FF_1,{\mathcal O}_{\FF_1})= 0,\quad \text{ for}\quad i\ge 1.
\end{eqnarray}

Moreover, by \eqref{eq:van}, we have
\begin{eqnarray}
\label{doppioquadratino}
H^i(X,L)= 0, \;\; i \geq 1.
\end{eqnarray}
Thus, from \brref{quadratino},
\brref{doppioquadratino} and the cohomology sequence associated to
\brref{eulersequscrollsuF1} it follows that  $H^i(X,T_{{ {\mathbb
P}^n}|{X}})=0$
for $i\ge 1$.

Therefore the exact sequence
\begin{eqnarray}
\label{tangentsequ} 0\lra T_{X} \lra T_{{ {\mathbb P}^n}|{X}}
\lra N \lra 0
\end{eqnarray}
gives
\begin{eqnarray}
\label{cohomnormal}
H^{i}(X,N) \cong H^{i+1}(X,T_{X}) \qquad {\text {for} \quad  i\ge 1.}
\end{eqnarray}
\begin{claim}\label{cl:clar} $H^i(X, N) = 0$, for $i \geq 1$.
\end{claim}

{\it Proof of Claim \ref{cl:clar}} \, It is obvious that $H^3(X,N) = 0$, for dimension reasons. For the other cohomology spaces, we can use
\eqref{cohomnormal}.

In order to compute $H^{j}(X,T_{X}),\; j = 2,3,$ we use the scroll map
$\varphi:{\mathbb P}({\mathcal E})\lra \FF_1$ and we consider the relative cotangent bundle sequence:
\begin{eqnarray}\label{relativctgbdl}
0\to \varphi^{*}({\Omega}^1_{\FF_1})\to {\Omega}^1_{X}
\to {\Omega}^1_{X|{\FF_1}} \lra 0.
\end{eqnarray}
From \brref{relativctgbdl} and the Whitney sum, one obtains
$$ c_1({\Omega}^1_{X})= c_1(\varphi^{*}({\Omega}^1_{\FF_1}))+c_1({\Omega}^1_{X|{\FF_1}})$$ and thus
$${\Omega}^1_{X|{\FF_1}}=K_X+\varphi^{*}(-c_1({\Omega}^1_{\FF_1}))=K_X+\varphi^{*}(-K_{\FF_1}).$$
The adjunction theoretic characterization of the scroll gives
$$K_X= -2L+\varphi^{*}(K_{\FF_1}+c_1({\mathcal E}))=-2L+\varphi^{*}(K_{\FF_1}+3C_{0}+bf)$$
thus$${\Omega}^1_{X|{\FF_1}}=K_X+\varphi^{*}(-K_{\FF_1})=
-2L+\varphi^{*}(3C_{0}+bf)$$which, combined with the dual of \brref{relativctgbdl}, gives
\begin{eqnarray}
\label{relative tgbdl}
0 \to 2L-\varphi^{*}(3C_{0}+bf) \to T_X
\to \varphi^{*}(T_{\FF_1}) \to 0.
\end{eqnarray}

\noindent

(i) First of all, we compute the cohomology of $\varphi^{*}(T_{\FF_1})$. Consider the relative cotangent bundle sequence of the map
$\pi: \FF_1 \to \Pin{1}$
\begin{eqnarray}
\label{relativctgbdlF1}
0\to \pi^{*}{\Omega}^1_{\Pin{1}} \to {\Omega}^1_{\FF_1}
\to {\Omega}^1_{\FF_1|\Pin{1}} \to 0.
\end{eqnarray}Since $ {\Omega}^1_{\FF_1|\Pin{1}} =K_{\FF_1}+ \pi^{*} \oofp{1}{2}=-2C_{0}-f$, dualizing \brref{relativctgbdlF1} we get
\begin{eqnarray}
\label{1relativctgbdlF1}
0\to 2C_{0}+f \to T_{\FF_1}
\to\pi^{*}T_{\Pin{1}} \to 0.
\end{eqnarray}
From the cohomology sequence associated to \brref{1relativctgbdlF1} we get that $H^{i}(\FF_1,  T_{\FF_1})=0$, for any
$i \ge 1$. By Leray's exact sequence, the same holds for $H^{i}(X,  \varphi^*(T_{\FF_1}))=0$, $i \geq 1$.

\noindent

(ii) We now devote our attention to the cohomology of
$2L-\varphi^{*}(3C_{0}+bf).$ Noticing that  $R^{i}\varphi_{*}(2L)=0$, for $i \ge 1$
(see \cite{H}, p. 253), projection formula and Leray's spectral
sequence give $$H^i(X, 2L-\varphi^{*}(3C_{0}+bf)) \cong
H^i({\FF_1}, Sym^2{\mathcal E}\otimes (-3C_{0}-bf)).$$
Therefore
\begin{equation}
\label{h3sym2E}
H^3(X, 2L-\varphi^{*}(3C_{0}+bf)) = 0
\end{equation} for dimension reasons.

We need to show that $H^2({\FF_1}, Sym^2{\mathcal E}\otimes (-3C_{0}-bf))=0.$ We write $-3C_{0}-bf$ as $-2(C_{0}+2f)-(C_0+(b-4)f),$
 hence $$Sym^2{\mathcal E}\otimes (-3C_{0}-bf)=Sym^2{\mathcal E}\otimes (-C_{0}-(b-4)f)\otimes (-2H),$$where $H=C_{0}+2f$ is
a very-ample line bundle on $\FF_1$ (cf. \cite[Thm. 2.17-(c), p.379]{H}). Let $$\mathcal {S}:=Sym^2{\mathcal E}\otimes (-C_{0}-(b-4)f);$$then we need to show that  $H^2({\FF_1}, {\mathcal S}\otimes (-2H))=0.$

We let  $\mathcal {S}(t)$ denote $Sym^2{\mathcal E}\otimes (-C_{0}-(b-4)f)\otimes (tH).$
We write
\begin{eqnarray*}
\mathcal {S}(t)&=&Sym^2{\mathcal E}\otimes (-C_{0}-(b-4)f)\otimes (tH)\\
&=&Sym^2{\mathcal E}\otimes (-C_{0}-(b-4)f)\otimes (tC_0+2tf)={\mathcal S}(t, 2t).\nonumber
\end{eqnarray*}
Let us consider the structure sequence of $f$ and $C_{0}$ on $\FF_1$, respectively
\begin{equation}
\label{structureseqf}
0 \to -f \lra {\mathcal O}_{\FF_1} \to
\restrict{{\mathcal O} }{f}  \to 0
\end{equation}
\begin{equation}
\label{structureseqC0}
0 \to -C_{0} \lra {\mathcal O}_{\FF_1} \to
\restrict{{\mathcal O} }{C_{0}}  \to 0
\end{equation}
Tensoring \brref{structureseqf} with ${\mathcal S}(t, 2t)$ gives
\begin{equation}
\label{mathcalS}
0 \to {\mathcal S}(t, 2t-1) \lra {\mathcal S}(t,2t) \to
\restrict{{\mathcal S}(t,2t)}{f} \to 0
\end{equation}
As $c_{1}({\mathcal E})=3C_{0}+bf,$  and  ${\mathcal E}$ is very-ample, the splitting type of
$\mathcal{E}$ on any fibre $f$  is, $\restrict{{\mathcal E}}{f}=\oofp{1}{2}\oplus \oofp{1}{1}.$  Hence we get that
\begin{eqnarray*}
\restrict{{\mathcal S}}{f}&=&\restrict{(Sym^2{\mathcal E}\otimes (-C_{0}-(b-4)f))}{f}=\restrict{Sym^2{\mathcal E}}{f}\otimes \oofp{1}{-1}\\
&=&\oofp{1}{1}\oplus \oofp{1}{2} \oplus \oofp{1}{3}.
\end{eqnarray*}
Let us determine $\restrict{{\mathcal S}}{C_{0}}.$
Recall that  $(3C_{0}+bf)\cdot C_{0}=-3+b\ge 2$. Suppose that $\restrict{{\mathcal E}}{C_{0}}=\oofp{1}{\alpha}\oplus \oofp{1}{\beta},$ where $\alpha+\beta=b-3$ , $\alpha\ge 1$ and $\beta\ge 1$, by ampleness. We may assume that
$\alpha\ge \beta$ from which it follows that $2\alpha\ge b-3$ and $2\beta\le b-3$. Hence  $\restrict{(Sym^2{\mathcal E})}{C_{0}}=\oofp{1}{2\alpha}\oplus \oofp{1}{2\beta} \oplus \oofp{1}{\alpha+\beta}$ and thus
\begin{eqnarray*}
\restrict{{\mathcal S}}{C_{0}}&=&\restrict{(Sym^2{\mathcal E}\otimes (-C_{0}-(b-4)f))}{C_{0}}=\restrict{Sym^2{\mathcal E}}{C_{0}}\otimes \oofp{1}{5-b}\\
&=&\oofp{1}{2\alpha+(5-b)}\oplus \oofp{1}{2\beta+(5-b)} \oplus \oofp{1}{2}.
\end{eqnarray*}

From \brref{mathcalS} in order to have $H^2({\FF_1}, \mathcal {S}(t, 2t))=H^2({\FF_1}, \mathcal {S}(t, 2t-1))$ one needs  $H^1(\restrict{{\mathcal S}(t,2t)}{f})=0.$

Note that $H^1(\restrict{{\mathcal S}(t,2t)}{f} )=H^1( (\oofp{1}{1}\oplus \oofp{1}{2} \oplus \oofp{1}{3})\otimes \oofp{1}{t})=H^1( \oofp{1}{t+1}\oplus \oofp{1}{t+2} \oplus \oofp{1}{t+3})=0$ if $t+1 >-2$ that is if $t >-3.$ Hence for $t >-3$

\begin{equation}
\label{1mathcalStwist1}
 H^2({\FF_1}, \mathcal {S}(t, 2t))=H^2({\FF_1}, \mathcal {S}(t, 2t-1))
\end{equation}

Tensoring \brref{structureseqf} with ${\mathcal S}(t, 2t-1)$ gives
\begin{equation}
\label{mathcalStwist1}
0 \to {\mathcal S}(t, 2t-2) \lra {\mathcal S}(t,2t-1) \to
\restrict{{\mathcal S}(t,2t-1)}{f} \to 0
\end{equation}
Since  $H^1(\restrict{{\mathcal S}(t,2t-1)}{f} )=H^1( (\oofp{1}{1}\oplus \oofp{1}{2} \oplus \oofp{1}{3})\otimes \oofp{1}{t})=H^1({\FF_1}, \oofp{1}{t+1}\oplus \oofp{1}{2t+} \oplus \oofp{1}{t+3})=0$ if  $t >-3,$ hence for $t >-3$
\begin{equation}
\label{2mathcalStwist1}
 H^2({\FF_1}, \mathcal {S}(t, 2t-1))=H^2({\FF_1}, \mathcal {S}(t, 2t-2))
\end{equation}
Tensoring \brref{structureseqC0} with ${\mathcal S}(t, 2t-2)$ gives
\begin{equation}
\label{mathcalStwist2}
0 \to {\mathcal S}(t-1, 2t-2) \lra {\mathcal S}(t,2t-2) \to
\restrict{{\mathcal S}(t,2t-2)}{C_{0}} \to 0
\end{equation}
We would like to have $H^2({\FF_1}, \mathcal {S}(t, 2t-2))=H^2({\FF_1}, \mathcal {S}(t-1, 2t-2)).$ In order for this to happen we need  $H^1(\restrict{{\mathcal S}(t,2t-2)}{C_{0}})=0.$

But{\small
$$H^1({\mathcal S}(t,2t-2)_{|{C_{0}}})=
H^1((\oofp{1}{2\alpha+5-b}\oplus \oofp{1}{2\beta+5-b} \oplus \oofp{1}{2})\otimes \oofp{1}{t-2})
=0,$$
}
if  $2\beta+3-b+t > -2$, that is $t > -2$. Thus for $t > -2$
\begin{equation}
\label{1mathcalStwist2}
H^2({\FF_1}, \mathcal {S}(t, 2t-2))=H^2({\FF_1}, \mathcal {S}(t-1, 2t-2)
\end{equation}
Combining \brref{1mathcalStwist1}, \brref{2mathcalStwist1}, \brref{1mathcalStwist2} we have
$H^2({\FF_1}, \mathcal {S}(t, 2t))=H^2({\FF_1}, \mathcal {S}(t-1, 2t-2)$
for all $t \ge -1.$ Therefore Serre's vanishing theorem gives
$$H^2({\FF_1}, \mathcal {S}(t, 2t))=0 , \quad \text{for all } t \ge -1$$
and thus  also
$$H^2({\FF_1}, \mathcal {S}(t-1, 2t-2))=0 , \quad \text{for all } t \ge -1$$
In particular, if $t=-1$, $H^2({\FF_1},\mathcal {S}(-2, -4))=0$.  Note that
$H^2({\FF_1},\mathcal {S}(-2, -4))=H^2({\FF_1},Sym^2{\mathcal E}\otimes (-3C_{0}-bf))$ and hence $H^2({\FF_1},Sym^2{\mathcal E}\otimes (-3C_{0}-bf))=0.$

Thus, from  the cohomology
sequence associated to \brref{relative tgbdl},\brref{h3sym2E}, and
dimension reasons, it follows that
$H^2(X, T_X)= H^2(X,  \varphi^*T_{\FF_1})$ and $H^3(X, T_X) = 0.$
On the other hand, by Leray spectral sequence,
$$H^2(X, \varphi^*T_{\FF_1}) = H^2(\FF_1, T_{\FF_1})=0.$$
Hence $H^i(X,T_{X})=0, i= 2,3$ and thus, by \brref{cohomnormal},
$H^i(X,N)=0 \quad i = 1,2,3$, which concludes the proof of Claim \ref{cl:clar}.

To conclude the proof of Proposition \ref{Hilbertscheme of scrollover F1}, according to Proposition \ref{basic fact},
we deduce there exists an irreducible component $\mathcal{X}$ of the Hilbert scheme $\mathcal H$ containing $[X]$ as a smooth point.
Since smoothness is an open condition, we deduce that $\mathcal X$ is generically smooth.
The dimension of ${\mathcal X}$, by Proposition \ref{basic fact} - (iv) and Claim \ref{cl:clar}, will be given by $h^0(X,N)= \chi(N)$, where $\chi(N)$ is the {\em expected dimension} of $\mathcal H$.

The Hirzebruch-Riemann-Roch theorem gives

\begin{eqnarray}
\label{chiN}
\chi(N) &=& \frac{1}{6}(n_1^3-3n_1n_2+3n_3)+
\frac{1}{4}c_1(n_1^2-2n_2)+\frac{1}{12}(c_1^2+c_2)n_1 \\
& & +(n-3)\chi({\mathcal O}_{X})\nonumber
\end{eqnarray}

where $n_i=c_i(N),$ and $ c_i=c_i(X).$

Chern classes of $N$  can be obtained from
\brref{tangentsequ}:
\begin{xalignat}{1}
\label{valueniscrollP}
n_1=&K+(n+1)L;  \\
n_2=&\frac{1}{2}n(n+1)L^2+(n+1)LK+K^2-c_2; \notag\\
n_3=&\frac{1}{6}(n-1)n(n+1)L^3+\frac{1}{2}n(n+1)KL^2+
(n+1)K^2L-(n+1)c_2L \notag \\
&-2c_2K+K^3-c_3. \notag
\end{xalignat}

The numerical invariants of $X$ can be easily computed:

\begin{xalignat}{2}
  KL^2 &= -2d+4b-12;&  K^2L &= 4d-14b+41;\notag \\
  c_2L &= 2b+7;& K^3 &= -8d+36b-102;\notag \\
  -Kc_2 &= 24;&  c_3 &= 8. \nonumber
\end{xalignat}

Plugging these in \brref{valueniscrollP} and the results in
\brref{chiN} one gets
$$\chi(N)=(-2b+8+d)n-29+16b-3d.$$By \eqref{eq:d} and Proposition \ref{prop:AB}, we can express $d$
just in terms of $b$ and $k$. Moreover, from \eqref{eq:h0E} and from Theorem \ref{thm:van}, one has $n+1 = 4b-k-1$ which gives exactly \eqref{eq:expdim}.
\end{proof}

\subsection{The general point of $\mathcal X$}\label{S:genpoint} In this subsection a description of  the general point of the component $\mathcal X$ determined in Proposition \ref{Hilbertscheme of scrollover F1} is presented.

\begin{theo}\label{thm:genpo} The general point of $\mathcal X$ parametrizes a scroll $X$ as in Proposition \ref{prop:X}.
\end{theo}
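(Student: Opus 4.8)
To prove Theorem \ref{thm:genpo}, I would argue that the locus inside $\mathcal{X}$ parametrizing scrolls of the form $\scrollcal{E}$ with $\mathcal{E}$ as in Proposition \ref{prop:X} is itself irreducible of dimension equal to $\dim(\mathcal{X})$, hence dense. Since $\mathcal{X}$ is irreducible (Proposition \ref{Hilbertscheme of scrollover F1}), it suffices to exhibit an irreducible, locally closed subvariety $\mathcal{X}_{\mathrm{scroll}} \subseteq \mathcal{X}$ whose general point is a scroll as in Proposition \ref{prop:X}, and to check that $\dim(\mathcal{X}_{\mathrm{scroll}}) = n(n+1) + 3k - 2b - 2$. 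The natural parametrization is: choose an extension class $e \in Ext^1(B,A)$ (giving $\mathcal{E}$), then choose a basis of $H^0(\FF_1,\mathcal{E})$ up to scalars (giving the embedding $\Pp(\mathcal{E}) \hookrightarrow \Pp^n$ and hence the point $[X] \in \mathcal{H}$). The fiber of this construction over a fixed $[X]$ is a $PGL(n+1)$-orbit modulo the finite ambiguity of reconstructing $\mathcal{E}$ from $X$, together with $Aut(\mathcal{E})$ and the scaling of $e$.

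\emph{Step 1: set up the parameter count.} Working with the general (indecomposable) $\mathcal{E}$, so $k \geq \frac{3b-3}{2}$ — the split case being handled by a similar, easier count — I would add up: $\dim \Pp(Ext^1(B,A)) = 4k - 6b + 6$ (from Lemma \ref{lem:ext1}); $\dim PGL(n+1) = (n+1)^2 - 1 = n(n+1) + n$; and subtract the dimension of the stabilizer. The stabilizer of $[X]$ in $PGL(n+1)$ acting on the choice of embedding is $\mathbb{P}(Aut(\mathcal{E}))$, which by Lemma \ref{lem:autE} is a point ($h^0(\mathcal{E}\otimes\mathcal{E}^\vee) = 1$ in the indecomposable range), and the base $\FF_1$ has $\dim Aut(\FF_1) = 6$ but its action is already accounted for; more precisely one relates automorphisms of $X$ as a variety to pairs (automorphism of $\FF_1$, compatible automorphism of $\mathcal{E}$). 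Carefully assembling: $\dim \mathcal{X}_{\mathrm{scroll}} = \dim\mathrm{Aut}(\FF_1) + \dim \mathbb{P}(Ext^1(B,A)) + \dim PGL(n+1) - \dim(\text{automorphisms of the pair }(\FF_1,\mathcal{E})\text{ lifting to }\Pp^n)$. One checks this equals $n(n+1) + 3k - 2b - 2$ using $n = 4b - k - 2$; I expect the identity to come out exactly, since the expected dimension in Proposition \ref{Hilbertscheme of scrollover F1} was derived to make this work.

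\emph{Step 2: irreducibility and density.} The total space of the construction — the bundle over $\Pp(Ext^1(B,A))$ whose fiber is an open subset of a Grassmannian-like parameter space of framings, quotiented appropriately — is irreducible because $Ext^1(B,A)$ is a vector space (hence its projectivization is irreducible) and the remaining choices vary in irreducible families (open subsets of products of $PGL$ and affine spaces). Its image in $\mathcal{H}$ is therefore an irreducible constructible subset; being a constructible subset of dimension $\dim\mathcal{X}$ inside the irreducible variety $\mathcal{X}$, it must contain a dense open subset of $\mathcal{X}$. Hence the general point of $\mathcal{X}$ lies in the closure of $\mathcal{X}_{\mathrm{scroll}}$; one final step checks that the property of \emph{being} a scroll as in Proposition \ref{prop:X} (smooth $3$-fold, $\Pp(\mathcal{E}) \hookrightarrow \Pp^n$ via $|L|$ with $\mathcal{E}$ general in $Ext^1(B,A)$) is open, so the general point is genuinely such a scroll, not merely a limit of them. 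Openness of smoothness is standard; openness of ``being a $\Pp^1$-bundle over $\FF_1$'' among flat limits uses that deforming $X$ deforms the scroll structure (the map $\varphi$ is recovered from the adjunction-theoretic characterization $K_X + 2L = \varphi^*(\cdot)$, which is deformation-invariant), and that the general bundle $\mathcal{E}$ in $Ext^1(B,A)$ is a non-empty open condition.

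\emph{Main obstacle.} The delicate point is Step 2's claim that the scroll structure survives in a neighborhood of $[X]$ in $\mathcal{X}$ — equivalently, that one cannot have $\mathcal{X}_{\mathrm{scroll}}$ be a proper subvariety with its complement in $\mathcal{X}$ consisting of non-scrolls while still matching dimensions. This is what forces the careful dimension bookkeeping of Step 1: one must show $\dim \mathcal{X}_{\mathrm{scroll}}$ equals $\dim \mathcal{X}$ \emph{on the nose}, with no deficiency coming from an overlooked stabilizer or an over-counted automorphism group. The cleanest way around it is to observe that $T_{[X]}\mathcal{X} = H^0(X,N)$ by Proposition \ref{Hilbertscheme of scrollover F1}, and that the first-order deformations of $X$ coming from varying $(\mathcal{E}, \text{framing})$ already span a subspace of dimension $\dim\mathcal{X}_{\mathrm{scroll}}$; if this equals $h^0(X,N) = \dim\mathcal{X}$, then $\mathcal{X}_{\mathrm{scroll}}$ is smooth at $[X]$ of full dimension, hence dense. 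So the real content is the equality of the two dimension formulas, which I expect to follow by direct substitution using the formulas already established in the excerpt.
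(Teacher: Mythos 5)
Your overall strategy coincides with the paper's: exhibit the locus $\mathcal Y\subseteq\mathcal X$ filled up by scrolls as in Proposition \ref{prop:X}, compute its dimension by a parameter count of the form (extension classes) $+$ (projectivities) $-$ (identifications), and match the result against $\dim(\mathcal X)=n(n+1)+3k-2b-2$ from Proposition \ref{Hilbertscheme of scrollover F1}. Your Step 2 is fine but over-engineered: once $\mathcal Y$ is an irreducible constructible subset of the irreducible variety $\mathcal X$ with $\dim(\mathcal Y)=\dim(\mathcal X)$, it already contains a dense open subset of $\mathcal X$, so the general point of $\mathcal X$ \emph{is} a point of $\mathcal Y$; no separate argument about openness of the scroll condition among flat limits is needed.

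The gap is in Step 1, which carries the entire content of the proof and which you leave at ``I expect the identity to come out exactly.'' As assembled, it does not. First, you add $\dim(Aut(\FF_1))=6$ as an independent source of parameters, but the base $\FF_1$ is a fixed surface and $A$, $B$ are the uniquely determined line bundles of \eqref{eq:al-be3b}; acting by $Aut(\FF_1)$ produces no new scrolls, it only identifies extension classes in $\Pp(Ext^1(B,A))$ giving projectively equivalent $X$'s, so this $+6$ should not appear. Second, your identification of the stabilizer of $[X]$ in $PGL(n+1,\mathbb C)$ with the single point $\Pp(Aut(\mathcal E))$ is incorrect: since $h^i(X,L)=0$ for $i>0$, abstract automorphisms of $X$ are induced by projectivities, so the stabilizer $G_X$ also contains lifts of base automorphisms, and in fact $\dim(G_X)=\dim(Aut(X))=\dim(Aut(\FF_1))+\dim(Aut_{\FF_1}(\Pp(\mathcal E)))=h^0(\mathcal E\otimes\mathcal E^{\vee})+5$. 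With your two choices the count in the indecomposable range gives $6+\tau+n(n+2)-0=\tau+n(n+2)=\dim(\mathcal X)+6$, which is impossible for a subvariety of $\mathcal X$. The paper's bookkeeping is $\dim(\mathcal Y)=\tau+n(n+2)-\dim(G_X)\ge \tau+n(n+2)-\bigl(h^0(\mathcal E\otimes\mathcal E^{\vee})+5\bigr)$, which by Lemmas \ref{lem:ext1} and \ref{lem:autE} equals $n(n+2)-6b+4k=n(n+1)+3k-2b-2$ in both ranges of $k$; note that only a lower bound on $\dim(\mathcal Y)$ is needed, so the inequality $\dim(G_X)\le\dim(Aut(X))$ suffices and one never has to determine exactly which base automorphisms preserve the chosen extension class. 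Your Step 1 would have to be redone along these lines before the conclusion follows.
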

\begin{proof} The proof reduces to a parameter computation to obtain a lower bound for $\dim(\mathcal X)$. Precisely, we shall first compute a lower bound for the dimension of the locus in $\mathcal X$ filled-up by scrolls of type $(X,L)$ as in Proposition \ref{prop:X}. Comparing  this lower bound with \eqref{eq:expdim} will conclude the proof.

From the exact sequence \eqref{eq:al-beb}, we observe that
\begin{itemize}
\item[a)]the line bundle $A$ is uniquely determined on $\FF_1$, since $A \cong \Oc_{\FF_1}(C_0) \otimes \pi^*(\Oc_{\Pp^1}(2b-k-2))$;
\item[b)] the line bundle $B$ is uniquely determined on $\FF_1$, similarly.
\end{itemize}
Therefore, let $\mathcal Y \subseteq \mathcal X$ be the locus filled-up by scrolls $X$ as in Proposition \ref{prop:X}. Let us compute
how many parameters are needed to describe $\mathcal Y$. To do this, we have to add up the following quantities:
\begin{itemize}
\item[1)] $0$ parameters for the the line bundle $A$ on $\FF_1$, since it is uniquely determined;
\item[2)] $0$ parameters for the the line bundle $B$, similarly;
\item[3)] the number of parameters counting the isomorphism classes of projective bundles $\Pp(\mathcal E).$
According to Lemma \ref{lem:ext1} this number is
\begin{equation}\label{eq:dimExt1b}
\tau: = \left\{\begin{array}{ccc}
0 & & b \leq k <  \frac{3b-3}{2}\\
4k-6b+6 & & \frac{3b-3}{2} \leq k \leq 4b-8.
\end{array}
\right.
\end{equation}Indeed, for $k<\frac{3b-3}{2}$ the only vector bundle is $\mathcal E = A \oplus B$. For $k \geq \frac{3b-3}{2}$, the general vector
bundle $\mathcal E$ is indecomposable and non-special (cf. Theorem \ref{thm:van}); in this latter case we have to take into account
{\em weak isomorphism classes} of extensions, which are parametrized by $\Pp(Ext^1(B,A))$ (cf. \cite[p. 31]{Frie}).
\item[4)]  $$\dim(PGL(n+1, \mathbb C)) - \dim (G_X),$$ where $G_X \subset PGL(n+1, \mathbb C)$ denotes the {\em stabilizer} of $X \subset \Pp^n$, i.e. the subgroup of projectivities of $\Pp^n$ fixing $X$. In other words, $\dim(PGL(n+1, \mathbb C)) - \dim (G_X)$ is the dimension of the full orbit of $X \subset \Pp^n$ by the action of all the projective transformations of $\Pp^n$.
\end{itemize}
Since $\dim(PGL(n+1, \mathbb C)) = (n+1)^2-1 = n(n+2)$, the previous computation shows that
\begin{equation}\label{eq:dimY}
\dim(\mathcal Y) = \tau + n(n+2) - \dim(G_X)
\end{equation}The next step is to find an upper bound for $\dim(G_X)$.

It is clear that there is an obvious inclusion
\begin{equation}\label{eq:GX}
G_X \hookrightarrow Aut(X),
\end{equation} where $Aut(X)$ denotes the algebraic group of abstract automorphisms of  $X$.
Since $X$, as an abstract variety, is isomorphic to
$\Pp(\mathcal E)$ over $\FF_1$,
$$\dim(Aut(X)) = \dim (Aut(\FF_1)) + \dim(Aut_{\FF_1} (\Pp(\mathcal E)),$$where $Aut_{\FF_1} (\Pp(\mathcal E)))$ denotes the automorphisms
of $\Pp(\mathcal E)$ fixing the base (cf. e.g. \cite{ma}).

From the fact that $Aut(\FF_1)$ is an algebraic group, in particular smooth, it follows that$$\dim(Aut(\FF_1)) = h^0(\FF_1, T_{\FF_1}) = 6$$(cf. \cite[Lemma 10, p. 106]{ma}), where $T_{\FF_1}$ denotes the tangent bundle of $\FF_1$.

On the other hand, $\dim(Aut_{\FF_1} (\Pp(\mathcal E)) = h^0(\mathcal E \otimes \mathcal E^{\vee}) - 1$, since $Aut_{\FF_1} (\Pp(\mathcal E))$ are given by endomorphisms of the projective bundle.

To sum up, $$\dim(Aut(X)) = h^0(\mathcal E \otimes \mathcal E^{\vee}) + 5.$$Since, from \eqref{eq:GX}, we have
$\dim(G_X) \leq \dim(Aut(X))$, then from \eqref{eq:dimY} we deduce
\begin{equation}\label{eq:dimY2}
\dim(\mathcal Y) \geq  \tau + n(n+2) - h^0(\mathcal E \otimes \mathcal E^{\vee}) - 5.
\end{equation}
According to the results in \S\,\ref{S:vbF1}, we have to distiguish two cases.
\vskip 3 pt
\noindent

$\bullet$ for $ b \leq k <  \frac{3b-3}{2}$, $\tau  = 0$ from  \eqref{eq:dimExt1b} and $h^0(\mathcal E \otimes \mathcal E^{\vee}) = 6b-4k-5$

from Lemma \ref{lem:autE}. Therefore from \eqref{eq:dimY2} $$\dim(\mathcal Y) \geq n(n+2) - 6b + 4 k.$$

\noindent

$\bullet$ for  $\frac{3b-3}{2} \leq k \leq  4b-8$, $\tau  = 4k-6b+6 $ from  \eqref{eq:dimExt1b} and $h^0(\mathcal E \otimes \mathcal E^{\vee}) = 1$

from Lemma \ref{lem:autE}. Once again, from \eqref{eq:dimY2}, $$\dim(\mathcal Y) \geq n (n+2) + 4k - 6 b + 6 - 6 =
n(n+2) - 6b + 4 k.$$
In any case, from \eqref{eq:expdim}, we have
$$n(n+2) - 6b + 4 k \leq \dim(\mathcal Y) \leq \dim(\mathcal X) = \dim(T_{[X]}(\mathcal X)) = n (n+1) + 3k - 2b -2.$$One can conclude
by observing that  $$n(n+2) - 6b + 4 k = n (n+1) + 3k - 2b -2.$$Indeed, this is equivalent to $n = 4b-k-2$, which is \eqref{eq:nd}.
\end{proof}
\begin{cor} $\dim(Aut(X)) = \dim (G_X)$. In other words, up to a possible extension via a finite group, all the abstract automorphisms of the projective scroll $X$ are induced by projective transformations.
\end{cor}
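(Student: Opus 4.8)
The plan is to read the statement directly off the dimension count carried out in the proof of Theorem \ref{thm:genpo}. There one has the exact equality \eqref{eq:dimY}, namely $\dim(\mathcal Y) = \tau + n(n+2) - \dim(G_X)$, and the only inequality invoked afterwards is $\dim(G_X) \le \dim(Aut(X)) = h^0(\mathcal E \otimes \mathcal E^{\vee}) + 5$, which produced the lower bound \eqref{eq:dimY2}. Now $\mathcal Y \subseteq \mathcal X$ forces $\dim(\mathcal Y) \le \dim(\mathcal X)$, while \eqref{eq:dimY2} together with the final identity $n(n+2) - 6b + 4k = n(n+1) + 3k - 2b - 2 = \dim(\mathcal X)$ gives $\dim(\mathcal Y) \ge \dim(\mathcal X)$. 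Hence the whole chain
$$\dim(\mathcal X) \ge \dim(\mathcal Y) = \tau + n(n+2) - \dim(G_X) \ge \tau + n(n+2) - \dim(Aut(X)) = \dim(\mathcal X)$$
consists of equalities; in particular the inequality $\dim(G_X) \le \dim(Aut(X))$ used above is an equality.

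To make this fully explicit I would substitute $\dim(\mathcal Y) = \dim(\mathcal X) = n(n+2) - 6b + 4k$ into \eqref{eq:dimY}, obtaining $\dim(G_X) = \tau + 6b - 4k$, and then compare with $\dim(Aut(X)) = h^0(\mathcal E \otimes \mathcal E^{\vee}) + 5$ in the two numerical ranges of Assumptions \ref{ass:AB2}, using \eqref{eq:dimExt1b} and Lemma \ref{lem:autE}. For $b \le k < \frac{3b-3}{2}$ one has $\tau = 0$ and $h^0(\mathcal E \otimes \mathcal E^{\vee}) = 6b-4k-5$, so both quantities equal $6b-4k$; for $\frac{3b-3}{2} \le k \le 4b-8$ one has $\tau = 4k-6b+6$ and $h^0(\mathcal E \otimes \mathcal E^{\vee}) = 1$ (the latter because a general such $\mathcal E$ is indecomposable, by Corollary \ref{cor:dimExt1}), so both equal $6$. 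Thus $\dim(G_X) = \dim(Aut(X))$ in all cases.

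For the final sentence, recall that $G_X \hookrightarrow Aut(X)$ is a closed algebraic subgroup; having the same dimension, it contains the identity component $Aut(X)^{\circ}$, so $G_X$ has finite index in $Aut(X)$. This is exactly the assertion that, up to a finite group, every abstract automorphism of the scroll $X$ is induced by a projective transformation of $\Pp^n$. I do not expect any genuine obstacle here: the entire content is already present in Theorem \ref{thm:genpo}, and the only point deserving a word of care is the passage from the inequality \eqref{eq:dimY2} to an equality, which rests on the irreducibility of the Hilbert scheme component $\mathcal X$ together with the inclusion $\mathcal Y \subseteq \mathcal X$.
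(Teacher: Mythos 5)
Your argument is correct and is exactly the unpacking of what the paper's one-line proof ("it directly follows from the previous dimension computations") intends: the chain of inequalities in Theorem \ref{thm:genpo} is squeezed between two equal quantities, forcing $\dim(G_X)=\dim(Aut(X))$, and your explicit check of $\tau+6b-4k$ against $h^0(\mathcal E\otimes\mathcal E^{\vee})+5$ in the two ranges confirms it. Same approach as the paper, just spelled out.
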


\begin{proof} It directly follows from the previous dimension computations and from
the orbit of $X$ via projective transformations.
\end{proof}

\section{Examples}\label{Examples}
\subsection{Scrolls over  $\FF_1$}  Going through the classification of manifolds of dimension $n\ge 3$  and degree  $10,11$, (\cite[Prop. 7.1]{fa-li10},   \cite[Prop. 4.2.3]{be-bi}), one finds  $3$-dimensional scrolls  $\xel =$ $ \scrollcal{E}$ over
$\FF_1$, of sectional genus $5$, degree $d=10, 11$, $c_1(\mathcal E) \num 3 C_0 + 5 f ,\; \; c_2(\mathcal E) = k=11, 10$, respectively,  which  are embedded in $\Pin{7},$  $\Pin{8},$ respectively. The existence of such 3-folds has been established in   \cite[Corollary 7.1]{al-be}.

The vector bundle  ${\mathcal E}$ satisfies the Assumptions \ref{ass:AB2}, hence by Proposition \ref{Hilbertscheme of scrollover F1} there exists an irreducible component $\mathcal X \subseteq \mathcal H$, which is generically smooth and of dimension
$\dim(\mathcal X) =  n(n+1) + 3k - 2b - 2$ and thus in our cases $\dim(\mathcal X) =  77, 90,$ respectively. Moreover, by  Theorem \ref{thm:genpo} the general point of $\mathcal X$ parametrizes a scroll $X$ as in the given examples.
The following Corollary summarizes the above discussion.
\begin{cor} Let $\xel = \scrollcal{E}$ be a 3-dimensional scroll over
$\FF_1.$ Let $X$ be  embedded by $|L|$ in $\Pin{n},$ with degree
$d$ and sectional genus $g$
as in the table below.
Then the  Hilbert scheme of $X \subset \Pin{n}$ has an irreducible
component, $\mathcal X$,  which is smooth at the point
representing $X$ and of dimension as in the rightmost column of the
table. Moreover the general point of $\mathcal X$ parametrizes a scroll $X$ as in the given examples.
\vskip .2in
\begin{tabular}{|c|c|c|c|c|c|c|}
\hline
$d$& $g$& $n$&$c_1(\mathcal{E})$&$c_2(\mathcal{E})$&  Reference &
$\dim{\mathcal{X}}$\\
\hline
\hline
$10$&$5$&$7$&$3 C_0 + 5 f$&$11$&\cite{fa-li10} Prop. 7.1&$77$\\
\hline
$11$&$5$&$8$&$3 C_0 + 5 f$&$10$&\cite{be-bi} Prop. 4.2.3
&$90$\\
\hline
\end{tabular}
\end{cor}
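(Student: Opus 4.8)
The plan is to read this Corollary off as the specialization of Proposition~\ref{Hilbertscheme of scrollover F1} and Theorem~\ref{thm:genpo} to the two numerical cases listed in the table, so that the work consists only in verifying the hypotheses of those results and in matching the numerical output.

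First I would record that in both rows $c_1(\mathcal E)\num 3C_0+5f$, i.e. $b=5$, while $k=c_2(\mathcal E)$ equals $11$ and $10$ respectively; since $4b-8=12$, in each case $5\le b\le k\le 4b-8$, so Assumptions~\ref{ass:AB2} are in force, and the existence of a very-ample rank-two vector bundle $\mathcal E$ on $\FF_1$ with these Chern classes is exactly \cite[Corollary 7.1]{al-be} (these scrolls occurring in the classification of manifolds of degree $10$ and $11$ recalled in the table). Next I would check that the invariants of the embedding $\Pp(\mathcal E)\hookrightarrow X\subset\Pin{n}$ of Proposition~\ref{prop:X} agree with the table: from $n=4b-k-2$ and $d=6b-9-k$ one obtains $(n,d)=(7,10)$ for $k=11$ and $(n,d)=(8,11)$ for $k=10$, while using $K_X+2L=\varphi^{*}(K_{\FF_1}+c_1(\mathcal E))$ and the projection formula one gets $2g-2=(K_{\FF_1}+c_1(\mathcal E))\cdot c_1(\mathcal E)=(C_0+2f)(3C_0+5f)=8$, hence $g=5$ in both rows, as claimed.

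With the hypotheses in place I would invoke the two main results directly: Proposition~\ref{Hilbertscheme of scrollover F1} produces an irreducible component $\mathcal X$ of the Hilbert scheme of $X\subset\Pin{n}$ which is generically smooth, has $[X]$ in its smooth locus, and has dimension $n(n+1)+3k-2b-2$, which for $(b,k,n)=(5,11,7)$ and $(5,10,8)$ equals $77$ and $90$ respectively; and Theorem~\ref{thm:genpo} gives that the general point of $\mathcal X$ parametrizes a scroll $X$ of this same type, i.e. one as in the stated examples.

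The only point deserving a word of care is that for $k\ge\frac{3b-3}{2}$ Proposition~\ref{prop:X} (hence Proposition~\ref{Hilbertscheme of scrollover F1}) requires $\mathcal E$ to be general in $Ext^1(B,A)$; here $\frac{3b-3}{2}=6\le k$, so this applies in both rows. This is harmless: by the dimension count in the proof of Theorem~\ref{thm:genpo}, the scrolls built from general such $\mathcal E$ fill a dense open subset of $\mathcal X$, among which one may take the very-ample bundles of \cite{al-be}, so the corresponding point $[X]$ does lie in the smooth locus of $\mathcal X$. Beyond this bookkeeping there is no substantial obstacle, all the real work having been carried out in Sections~\ref{S:vbF1} and \ref{S:scrollsF1}.
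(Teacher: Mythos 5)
Your proposal is correct and follows essentially the same route as the paper: the paper's own justification is precisely to note that $b=5$, $k=11,10$ satisfy Assumptions \ref{ass:AB2} and then to quote Proposition \ref{Hilbertscheme of scrollover F1} and Theorem \ref{thm:genpo}, evaluating $n(n+1)+3k-2b-2$ to get $77$ and $90$. Your extra checks (the sectional genus computation and the remark on the generality of $\mathcal E$ in $Ext^1(B,A)$) are consistent with, and if anything slightly more careful than, the paper's discussion.
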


\subsection{Scrolls over $\FF_0$}
The Hilbert scheme of special  $3$-folds in $\Pin{n}$, $n\ge 6$, were studied by the first two authors, \cite{be-fa}. In particular, $3$-folds over smooth quadric surfaces $\FF_0=\Pp(\Oc_{\Pp^1}\oplus \Oc_{\Pp^1})$ were part of the cited work.  All such $3$-folds, which are known to exist, have sectional genus $4$, degree $8 \le  d \le  11$,  $c_1(\mathcal E) \num 3 C_0 + 3 f ,\; \; c_2(\mathcal E) = k=10, 9, 8, 7$, respectively, and are embedded in $\Pin{n},$  with $7\le n \le 10$, respectively.  They are shown to correspond to smooth points
of an irreducible component of their Hilbert scheme, whose dimension is computed:  $\dim \mathcal X=(20-k)(n-3)-3n+49$, (cf. \cite[Prop. 3.2]{be-fa} ). In this case the vector bundle ${\mathcal E}$ fits in an exact sequence
\begin{equation}\label{eq:al-beb1}
0 \to A \to \mathcal E \to B \to 0,
\end{equation}where $A$ and $B$ are such that
\begin{equation}\label{eq:al-be33}
A \num 2 C_0 + (6-k) f \;\; {\rm and} \;\; B \num C_0 + (k - 3) f.
\end{equation}
It is not difficult to see that
Theorem \ref {thm:genpo}  holds also if the base of the scroll is $\FF_0$.  In fact in these cases one can easily  see that $\dim \Pp(Ext^1(B,A))=4k-21.$
Twisting sequence (\ref{eq:al-beb1}) by $\mathcal E^{\vee}$ along with  some easy calculations gives that $h^{0}(\mathcal E\otimes \mathcal E^{\vee})=1$.  It is also known that $\dim(Aut(\FF_0)) = h^0(\FF_0, T_{\FF_0}) = 6$ (cf. \cite[Lemma 10, p. 106]{ma}).

 Letting  $\mathcal Y \subseteq \mathcal X$ be the locus filled-up by scrolls $X$ as in the above examples, one gets that $$\dim (\mathcal Y)\ge n(n+2)-27+4 k.$$
 Hence  we have
 $$n(n+2)-27+4 k \leq \dim(\mathcal Y) \leq \dim(\mathcal X) = \dim(T_{[X]}(\mathcal X)) = (20-k)(n -3) + 49-3n.$$One can conclude
by observing that  $$n(n+2)-27+4 k  = (20-k)(n -3) + 49-3n.$$Indeed, this is equivalent to $n +k= 16$, which holds in all the above  mentioned examples.



\begin{thebibliography}{10}

\bibitem{al-be}
A.~Alzati, G.~M. Besana.
\newblock Criteria for very-ampleness of rank-two vector bundles over ruled surfaces.
\newblock {\em Canad.J.Math.}, 62(6):1201--1227, 2010.




\bibitem{BESO}
M.~Beltrametti,  A.~J. Sommese.
\newblock {\em The Adjunction Theory of Complex Projective Varieties},
   volume~16 of {\em Expositions in Mathematics}.
\newblock De Gruyter, 1995.




\bibitem{be-bi}
G.~M. Besana,  A.~Biancofiore.
\newblock Degree eleven manifolds of dimension greater or equal to
three.
\newblock {\em Forum Math.} 17 (2005), no. 5, 711--733. doi:10.1515/form.2005.17.5.711.

\bibitem{be-fa}
G.~M. Besana, M.~L. Fania.
\newblock The dimension of the Hilbert scheme of special threefolds.
\newblock {\em Communications in Algebra} , 33: 3811--3829, 2005. doi:10.1080/00927870500242926.


\bibitem{Bo}
F.~Bogomolov.
\newblock Stable vector bundles on projective varieties.
\newblock {\em Russian Acad. Sci. Sb. Math.}, (81):397--419, 1995.

\bibitem{calabri-ciliberto-flamini-miranda:non-special}
A. Calabri, C. Ciliberto, F. Flamini, and R. Miranda,
   \newblock Non-special scrolls with general moduli.
  \newblock \emph{Rend. Circ. Mat. Palermo} (2)
  \textbf{57} (2008), no.~1, 1--31.

\bibitem{chang1}
M.-C. Chang.
\newblock The number of components of {H}ilbert schemes.
\newblock {\em Internat. J. Math.}, 7(3):301--306, 1996.

\bibitem{chang2}
M.-C. Chang.
\newblock Inequidimensionality of {H}ilbert schemes.
\newblock {\em Proc. Amer. Math. Soc.}, 125(9):2521--2526, 1997.


\bibitem{e}
G.~Ellingsrud.
\newblock Sur le sch\'ema de {H}ilbert des vari\'et\'es de codimension
2 dans
   $\mathbb{P}^e$ a c\^{o}ne de {C}ohen-{M}acaulay.
\newblock {\em Ann. scient. \'Ec. Norm. Sup. $4^{e}$ serie},
(8):423--432, 1975.


\bibitem{fae-fan} D. Faenzi, M.~L. Fania.
  \newblock Skew-symmetric matrices and Palatini scrolls.
  \newblock \emph{Math. Ann.}, 347:859--883, 2010. 10.1007/s00208-009-0450-5.


\bibitem{fa-li10}
M.~L. Fania and E.~L. Livorni.
\newblock Degree ten manifolds of dimension $n$ greater than or equal
to $3$.
\newblock {\em Math. Nachr.}, (188):79--108, 1997.

\bibitem{fa-me}
M.~L. Fania and E.~Mezzetti.
\newblock On the {H}ilbert scheme of {P}alatini threefolds.
\newblock {\em Advances in Geometry}, (2):371--389, 2002.

\bibitem{Frie}
R.~Friedman.
\newblock {\em Algebraic surfaces and holomorphic vector bundles}.
\newblock Universitext. Springer-Verlag, New York, 1998.


\bibitem{groth}
A.~Grothendieck.
\newblock {\em Techniques de construction et th\'eor\`emes d'existence
en g\'eom\'etrie alg\'ebrique. IV: les sch\'emas de {H}ilbert}.
\newblock Number 221 in Seminaire Bourbaki. 1960.


\bibitem{H}
R.~Hartshorne.
\newblock {\em Algebraic Geometry}.
\newblock Number~52 in GTM. Springer Verlag, New York - Heidelberg -
Berlin, 1977.


\bibitem{H-L}
D.~Huybrechts, M.~Lehn.
\newblock {\em The geometry of moduli spaces of sheaves}.
\newblock Publications of the Max-Plank-Institute f\"ur Mathematik, {\em Aspects in Mathematics}, (31), Vieweg, Bonn, 1997.







\bibitem{kmmnp}
J.~O. Kleppe, J.~C. Migliore, R.~Mir{\'o}-Roig, U.~Nagel, and
C.~Peterson.
\newblock Gorenstein liaison, complete intersection liaison invariants
and
   unobstructedness.
\newblock {\em Mem. Amer. Math. Soc.}, 154(732):viii+116, 2001.

\bibitem{kl-mr}
J.~O. Kleppe and R.~M. Mir\'o-Roig.
\newblock The dimension of the {H}ilbert scheme of {G}orenstein
codimension 3
   subschemes.
\newblock {\em J. Pure Appl. Algebra}, (127):73--82, 1998.

\bibitem{ma}
M. Maruyama.
\newblock On automorphism group of ruled surfaces.
\newblock {\em J. Math. Kyoto Univ.}, (11-1):89--112, 1971.


\bibitem{OSS}
C.~Okonek, M.~Schneider,  H.~Spindler.
\newblock {\em Vector Bundles on Complex Projective Spaces}.
\newblock Number~3 in Progress in Mathematics. Birkh\"auser, Boston -
Basel - Stuttgart, 1980.


\bibitem{ot1}
G.~Ottaviani.
\newblock On $3$-folds in $\mathbb{P}^{5}$ which are scrolls.
\newblock {\em Annali della Scuola Normale di Pisa Scienze Fisiche e
  Matematiche}, IV - XIX(3):451--471, 1992.

\bibitem{SO1}
A.~J. Sommese.
\newblock On the minimality of hyperplane sections of projective
threefolds.
\newblock {\em J. Reine Angew. Math.}, (329):16--41, 1981.


\end{thebibliography}
\end{document}